\documentclass[11pt,a4paper,reqno]{amsart}
\usepackage[ansinew]{inputenc}
\usepackage[T1]{fontenc}
\usepackage[english]{babel}
\usepackage{amsthm}
\usepackage{amsfonts}     	
\usepackage{amsmath}
\usepackage{amssymb}
\usepackage{esint}
\usepackage{hyperref}

%
 %
 %
 %

\def\vint_#1{\mathchoice%
           {\mathop{\kern 0.2em\vrule width 0.6em height 0.69678ex depth -0.58065ex
                   \kern -0.8em \intop}\nolimits_{\kern -0.4em#1}}%
           {\mathop{\kern 0.1em\vrule width 0.5em height 0.69678ex depth -0.60387ex
                   \kern -0.6em \intop}\nolimits_{#1}}%
           {\mathop{\kern 0.1em\vrule width 0.5em height 0.69678ex
               depth -0.60387ex
                   \kern -0.6em \intop}\nolimits_{#1}}%
           {\mathop{\kern 0.1em\vrule width 0.5em height 0.69678ex depth -0.60387ex
                   \kern -0.6em \intop}\nolimits_{#1}}}
\def\vintslides_#1{\mathchoice%
           {\mathop{\kern 0.1em\vrule width 0.5em height 0.697ex depth -0.581ex
                   \kern -0.6em \intop}\nolimits_{\kern -0.4em#1}}%
           {\mathop{\kern 0.1em\vrule width 0.3em height 0.697ex depth -0.604ex
                   \kern -0.4em \intop}\nolimits_{#1}}%
           {\mathop{\kern 0.1em\vrule width 0.3em height 0.697ex depth -0.604ex
                   \kern -0.4em \intop}\nolimits_{#1}}%
           {\mathop{\kern 0.1em\vrule width 0.3em height 0.697ex depth -0.604ex
                   \kern -0.4em \intop}\nolimits_{#1}}}

\newcommand{\aveint}[2]{\mathchoice%
           {\mathop{\kern 0.2em\vrule width 0.6em height 0.69678ex depth -0.58065ex
                   \kern -0.8em \intop}\nolimits_{\kern -0.45em#1}^{#2}}%
           {\mathop{\kern 0.1em\vrule width 0.5em height 0.69678ex depth -0.60387ex
                   \kern -0.6em \intop}\nolimits_{#1}^{#2}}%
           {\mathop{\kern 0.1em\vrule width 0.5em height 0.69678ex depth -0.60387ex
                   \kern -0.6em \intop}\nolimits_{#1}^{#2}}%
           {\mathop{\kern 0.1em\vrule width 0.5em height 0.69678ex depth -0.60387ex
                   \kern -0.6em \intop}\nolimits_{#1}^{#2}}}

\newcommand{\R}{\mathbb{R}}

\newcommand{\N}{\mathbb{N}}

\newcommand{\diam}{\operatorname{diam}}

\renewcommand{\epsilon}{\varepsilon}

\theoremstyle{plain}

\newtheorem{lause}{Theorem}[section]
\newtheorem{lem}[lause]{Lemma}

\newtheorem{maar}[lause]{Definition}

\theoremstyle{remark}

\numberwithin{equation}{section}
\pagestyle{headings}
\setcounter{page}{1}
\begin{document}

\title[Regularity of game value functions]{Local regularity results for value functions of tug-of-war with noise and running payoff}

\author[Eero Ruosteenoja]{Eero Ruosteenoja}
\address{Department of Mathematics and Statistics, University of
Jyvaskyla, PO~Box~35, FI-40014 Jyvaskyla, Finland}
\email{eero.ruosteenoja@jyu.fi}
\date{\today}
\keywords{Stochastic games, Lipschitz estimates, Harnack's inequality, p-Laplacian, viscosity solutions, tug-of-war.} \subjclass[2010]{Primary 35B65, 91A15.}

\setlength{\parindent}{15pt}
\begin{abstract}
We prove local Lipschitz continuity and Harnack's inequality for value functions of the stochastic game tug-of-war with noise and running payoff. As a consequence, we obtain game-theoretic proofs for the same regularity properties for viscosity solutions of the inhomogeneous $p$-Laplace equation when $p>2$.  
\end{abstract}
\maketitle
\section{introduction} 
Max and Minnie play a zero-sum stochastic game as follows. Fix $\epsilon>0$. First a token is placed at $x_0\in \Omega$, where $\Omega\subset \R^n$ is a bounded domain. With probability $\alpha\in (0,1)$ they flip a fair coin and the winner can move the token anywhere in an open ball $B_\epsilon (x_0)$. With probability $\beta=1-\alpha$ the token moves to a random point in $B_\epsilon(x_0)$. The game continues until the token hits the boundary of $\Omega$ for the first time in, say $x_\tau$. Then Minnie pays Max a \emph{total payoff} 
\[
F(x_\tau)+\epsilon^2\sum^{\tau-1}_{j=0}f(x_j),
\] 
where $F$ is a bounded \emph{final payoff} on the boundary and $f$ a positive, bounded \emph{running payoff} in $\Omega$. Since $\beta>0$, the game ends almost surely in a finite time. Max tries to maximize the total payoff and Minnie tries to minimize it.

For given payoffs and $\epsilon>0$, the game has a value $u_\epsilon$, which is locally Lipschitz continuous up to the scale $\epsilon$. To be more precise, we show in Theorem \ref{Lemma 1} that if $B_{6R}(a)\subset \Omega$ and $\epsilon<r\leq R$, then
\[
\text{osc}(u_\epsilon, B_r(a))\leq C\frac{r}{R}\left[\text{osc}(u_\epsilon,B_{6R}(a))+\text{osc}(f,B_{6R}(a))\right],
\]
where $C>0$ depends only on $p$ and $n$. In Theorem \ref{päälause} we show that if $B_{30r}(a)\subset \Omega$, the value function $u_\epsilon$ satisfies Harnack's inequality
\[
\sup_{B_r(a)}u_\epsilon\leq K (\inf_{B_r(a)}u_\epsilon+\sup_{\Omega} f),
\]
where $K=K(p,n)>0$. In the proofs of Theorems \ref{Lemma 1} and \ref{päälause} key ideas are related to controlling the expected cumulative effect of running payoff during the game under proper strategies. 

According to Lemmas \ref{DPP1} and \ref{DPP2}, the value functions satisfy
\begin{equation}\label{dynaamista}
u_\epsilon(x)=\frac{\alpha}{2}\left\{\sup_{B_\epsilon(x)}u_\epsilon+\inf_{B_\epsilon(x)}u_\epsilon\right\}+\beta\vint_{B_\epsilon(x)}u_\epsilon\ dy+\epsilon^2f(x)
\end{equation}
for all $x\in \Omega$. Choosing the probabilities $\alpha$ and $\beta$ properly, this \emph{dynamic programming principle} (hereafter DPP) gives a connection to viscosity solutions of the inhomogeneous $p$-Laplace equation
\begin{equation}\label{vissi1}
-\frac{1}{p}\left|\nabla u\right|^{2-p}\text{div}(\left|\nabla u\right|^{p-2}\nabla u)=:-\Delta^N_p u=f,
\end{equation}
where $p>2$. Let $f>0$ be continuous and bounded. If $u$ is a viscosity solution to \eqref{vissi1} in $\Omega$ with some continuous and bounded boundary values, by Lemma \ref{raja} there is a sequence $(u_\epsilon)$ of value functions converging locally uniformly to $u$. By Theorems \ref{Lemma 1} and \ref{päälause}, the function $u$ is locally Lipschitz continuous and  satisfies Harnack's inequality. To the best of our knowledge, Lipschitz estimate is unavailable in the literature in the case $2<p\leq n$. In the case $p>n\geq 2$ similar estimates were proven in \cite{Charro} by using PDE methods. Harnack's inequality also follows by utilizing PDE methods described e.g.\ in \cite{Caffarelli,Gilbarg}. For recent advances, see e.g.\ \cite{kuusi}.
     
Tug-of-war games were first introduced by Peres, Schramm, Sheffield and Wilson in \cite{Peres} and by Peres and Sheffield in \cite{PS}. Various versions of the game have connections to the theory of nonlinear PDEs. For example, value functions of the tug-of-war approximate infinity harmonic functions and value functions of the tug-of-war with noise approximate $p$-harmonic functions. Game-theoretic arguments have generated many new, intuitive proofs for uniqueness and regularity properties of infinity harmonic and $p$-harmonic functions. See e.g.\ \cite{Antunovic,Armfinite,Arm,Bjorland1,Bjorland2,Lopez,Harnack,rossi}. For existence of viscosity solutions to certain parabolic equations, see e.g.\ \cite{para1,para2}. For different versions of DPP, see e.g.\ \cite{Hart}.

In Section \ref{sec 2} we define the game, show that it has a value which satisfies DPP \eqref{dynaamista}, and give tools to estimate the value functions under different strategies and payoffs. In Section \ref{sec 3} we prove lemmas regarding expected stopping times under specific strategies, local comparison of value functions and control of infimum. In Section \ref{sec 4} we prove Theorems \ref{Lemma 1} and \ref{päälause}. In Section 5 we discuss the connection to the inhomogeneous $p$-Laplace equation.

\section{Background of the game}\label{sec 2}
Fix $\epsilon>0$ and $p>2$. The probabilities in the game are $\alpha=(p-2)/(n+p)$ and $\beta=(n+2)/(n+p)$. Define
\[
\Gamma_\epsilon :=\left\{x\in \R^n\setminus\Omega\ :\ \text{dist}(x,\partial \Omega)\leq \epsilon\right\}
\]
and
\[
\Omega_\epsilon:=\Omega\cup \Gamma_\epsilon.
\] 
Then $B_\epsilon(x)\subset \Omega_\epsilon$ for all $x\in \Omega$. The game ends when the token hits $\Gamma_\epsilon$ for the first time. In Sections \ref{sec 2}, \ref{sec 3} and \ref{sec 4} the payoffs $F:\Gamma_\epsilon\rightarrow \R$ and $f:\Omega\rightarrow (0,\infty)$ are bounded and Borel measurable.  

Let us briefly describe the stochastic terminology used in this paper. Strategies $S_\text{I}$ for Max and $S_{\text{II}}$ for Minnie are collections of Borel-measurable functions that give the next game position given the history of the game. When we fix a certain strategy for a player, we usually write $S^\text{Max}_\text{I}$ for Max and $S^\text{Min}_\text{II}$ for Minnie. By a history of the game up to step $k$ we mean a sequence    
\[
(x_0,(c_1,x_1),...,(c_k,x_k)),
\]
where $x_0,...,x_k\in \Omega_\epsilon$ are game positions and $c_j\in \mathcal{C}:=\left\{0,1,2\right\}$. Here $c_j=0$ means that Max wins, $c_j=1$ that Minnie wins and $c_j=2$ that a random step occurs. Our probability space is the space of all game sequences
\[
H^\infty:=\left\{\omega\ :\ \omega\in x_0\times (\mathcal{C},\Omega_\epsilon)\times ...\right\}.
\] 
Put $\mathcal{F}_0:=\sigma(x_0)$ and 
\[
\mathcal{F}_k:=\sigma(x_0,(c_1,x_1),...,(c_k,x_k))
\]
for $k\geq 1$. Note that here $(c_1,x_1),...$ are random variables. Then 
\[
\tau(\omega):=\inf \left\{k\ :\ x_k\in \Gamma_\epsilon, k=0,1,...\right\}
\]
is a stopping time relative to the filtration $\left\{\mathcal{F}_k\right\}^\infty_{k=0}$. 

The fixed starting point $x_0$ and the strategies $S_{\text{I}}$ and $S_{\text{II}}$ determine a unique probability measure $\mathbb{P}^{x_0}_{S_{\text{I}},S_{\text{II}}}$ on the product $\sigma$-algebra, see e.g.\ \cite{Harnack}.

The expected total payoff, when starting from $x_0$ and using strategies $S_\text{I}$ and $S_\text{II}$, is obtained as a sum of final payoff and running payoff
\begin{align*}
\mathbb{E}^{x_0}&_{S_\text{I},S_\text{II}}\left[F(x_\tau)+\epsilon^2\sum^{\tau-1}_{i=0}f(x_i)\right]\\ 
& :=\int_{H^\infty}\left(F(x_\tau (\omega))+\epsilon^2\sum^{\tau-1}_{i=0}f(x_i)\right)d\mathbb{P}^{x_0}_{S_\text{I},S_\text{II}}(\omega).
\end{align*}

\emph{The value of the game for Max in} $x_0\in \Omega$ is given by
\[
u^\epsilon_\text{I}(x_0):=\sup_{S_\text{I}}\inf_{S_\text{II}}\mathbb{E}^{x_0}_{S_\text{I},S_\text{II}}\left[F(x_\tau)+\epsilon^2\sum^{\tau-1}_{i=0}f(x_i)\right],
\]
while \emph{the value of the game for Minnie} is given by
\[
u^\epsilon_\text{II}(x_0):=\inf_{S_\text{II}}\sup_{S_\text{I}}\mathbb{E}^{x_0}_{S_\text{I},S_\text{II}}\left[F(x_\tau)+\epsilon^2\sum^{\tau-1}_{i=0}f(x_i)\right].
\]
If a function $u$ is defined in $\Omega_\epsilon$, $u=F$ on $\Gamma_\epsilon$ and
\[
u=u^\epsilon_\text{I}=u^\epsilon_\text{II}
\]
in $\Omega$, then $u$ is \emph{the value of the game}.

The next two lemmas guarantee that the game has a value which satisfies DPP \eqref{dynaamista}. For similar results without a running payoff, see \cite[Theorems 2.1, 2.2, 3.2]{ex}.

\begin{lem}\label{DPP1}
For given payoffs and $\epsilon>0$, there is a unique Borel measurable function $u_\epsilon:\Omega_\epsilon\rightarrow \R$, $u_\epsilon=F$ on $\Gamma_\epsilon$, which satisfies DPP \eqref{dynaamista} for all $x\in \Omega$.
\end{lem}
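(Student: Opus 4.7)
The plan is to realize $u_\epsilon$ as the unique fixed point of the DPP operator
\[
Tu(x)=\frac{\alpha}{2}\bigl(\sup_{B_\epsilon(x)}u+\inf_{B_\epsilon(x)}u\bigr)+\beta\vint_{B_\epsilon(x)}u\,dy+\epsilon^2 f(x),\qquad x\in\Omega,
\]
with $Tu=F$ on $\Gamma_\epsilon$, acting on the set $\mathcal{B}$ of bounded Borel measurable functions on $\Omega_\epsilon$ that agree with $F$ on $\Gamma_\epsilon$. First I would check that $T$ maps $\mathcal{B}$ into itself. Continuity of $x\mapsto\vint_{B_\epsilon(x)}u\,dy$ follows from continuity of translation in $L^1$, while $x\mapsto\sup_{B_\epsilon(x)}u$ and $x\mapsto\inf_{B_\epsilon(x)}u$ are respectively lower and upper semicontinuous (hence Borel) because the balls are open. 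Boundedness is clear since $F$, $f$ and $u$ are all bounded.

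For existence, I would run a monotone iteration. Pick constants $U$ and $L$ with $U\ge\|F\|_\infty+C$ and $L\le -\|F\|_\infty-C$ for $C>0$ large enough (depending on $\|f\|_\infty$, $\epsilon$ and $\diam\Omega$) so that the corresponding extensions by $F$ on $\Gamma_\epsilon$ satisfy $TU\le U$ and $TL\ge L$. Monotonicity of $T$ is immediate from the formula. Hence $u^+_k:=T^kU$ decreases and $u^-_k:=T^kL$ increases; both are bounded, so they converge pointwise to Borel measurable limits $u^+\ge u^-$. Passing to the limit under the integral and the monotone sup/inf shows that each of $u^\pm$ solves the DPP.

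It remains to show $u^+=u^-$, which simultaneously provides existence and uniqueness. Subtracting the two DPPs and using $\sup u^+-\sup u^-\le\sup(u^+-u^-)$ together with the analogous bound on the infima, I get
\[
0\le w(x)\le\alpha\sup_{B_\epsilon(x)}w+\beta\vint_{B_\epsilon(x)}w\,dy,\qquad w:=u^+-u^-,
\]
with $w=0$ on $\Gamma_\epsilon$. I interpret this inequality as one step of an auxiliary dynamic: with probability $\alpha$ the token jumps to a near-maximizer of $w$ in $B_\epsilon(x)$ (selected measurably), and with probability $\beta$ to a uniform point of $B_\epsilon(x)$. Iterating produces a process $(x_k)$ on $\Omega_\epsilon$ satisfying $w(x_0)\le\mathbb{E}^{x_0}[w(x_{k\wedge\tau})]$, where $\tau$ is the exit time from $\Omega$. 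Because $\Omega$ is bounded and $\beta>0$, the noise component alone forces $\tau$ to be almost surely finite with $\mathbb{E}^{x_0}[\tau]<\infty$; sending $k\to\infty$ and using $w=0$ on $\Gamma_\epsilon$ gives $w\equiv 0$. Applying the same inequality to $|u-v|$ shows that any two solutions in $\mathcal{B}$ must coincide.

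The main obstacle is that the sup/inf operations make $T$ nonlinear in $u$, so $T$ is not a sup-norm contraction and Banach's fixed point theorem is unavailable. The stochastic interpretation of the difference inequality, relying crucially on the noise component being present with strictly positive probability $\beta>0$, is what replaces the missing contraction.
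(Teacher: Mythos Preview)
Your existence argument has a genuine gap at the very first step: no constant $U$ extended by $F$ can satisfy $TU\le U$. Indeed, for any $x\in\Omega$ with $B_\epsilon(x)\subset\Omega$ the function equals $U$ throughout $B_\epsilon(x)$, so
\[
TU(x)=\frac{\alpha}{2}(U+U)+\beta\,U+\epsilon^2 f(x)=U+\epsilon^2 f(x)>U,
\]
since $f>0$. No choice of $C$ fixes this, because the obstruction is additive and independent of the size of $U$. The paper avoids this by iterating only from below (from the constant $\inf_{\Gamma_\epsilon}F$, which \emph{is} a subsolution precisely because $f>0$) and then proving separately, by a chaining argument along an $\epsilon/2$-path to the boundary, that the resulting increasing sequence stays bounded.

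There is a second gap even if you had a valid starting supersolution. Pointwise monotone convergence does not, in general, commute with both $\sup_{B_\epsilon(x)}$ and $\inf_{B_\epsilon(x)}$ simultaneously: for $u_k^-\uparrow u^-$ the sup passes to the limit but the inf need not, and dually for $u_k^+\downarrow u^+$. Hence you only obtain that $u^-$ is a DPP \emph{sub}solution and $u^+$ a DPP \emph{super}solution, and then your difference inequality for $w=u^+-u^-$ comes out with the wrong sign,
\[
w(x)\ \ge\ \alpha\inf_{B_\epsilon(x)}w+\beta\vint_{B_\epsilon(x)}w\,dy,
\]
from which $w\equiv 0$ does not follow. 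The paper handles this by proving that the one-sided iteration converges \emph{uniformly} (an argument that again uses $\beta>0$, via the integral term), so that both sup and inf pass to the limit and the pointwise limit is an honest solution. Your stochastic uniqueness argument for two genuine solutions is fine and is a legitimate alternative to the paper's measure-theoretic maximum-set argument; the problem is only that your existence scheme never produces a genuine solution to feed into it.
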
 

\begin{proof}
First we show the existence. Let $(u_k)^\infty_{k=0}$ be a sequence of functions $\Omega_\epsilon\rightarrow \R$ such that $u_k=F$ on $\Gamma_\epsilon$ for all $k\in \N$, $u_0=\inf_{\Gamma_\epsilon}F$ in $\Omega$ and
\[
u_{k+1}(x)=\frac{\alpha}{2}\left\{\sup_{B_\epsilon(x)}u_k+\inf_{B_\epsilon(x)}u_k\right\}+\beta\vint_{B_\epsilon(x)}u_k dy+\epsilon^2 f(x)
\]
for all $k\in \N$ and $x\in \Omega$. If $k\geq 1$ and $u_k\geq u_{k-1}$, then 
\begin{align*}
u_{k+1}(x)&\geq \frac{\alpha}{2}\left\{\sup_{B_\epsilon(x)}u_{k-1}+\inf_{B_\epsilon(x)}u_{k-1}\right\}+\beta\vint_{B_\epsilon(x)}u_{k-1} dy+\epsilon^2 f(x)\\
&=u_k(x)
\end{align*}
for all $x\in \Omega$. Since $f>0$, we have $u_1\geq u_0$. By induction, the sequence $(u_k)$ is increasing. 

The sequence $(u_k)$ is also bounded. Let $D=\text{diam}(\Omega)$ and $N=\sup_\Omega f$. Note that for any point $y_0\in \Omega$ there is a sequence $(y_i)^{2D/\epsilon}_{i=0}$ for which $\left|y_{i+1}-y_i\right|\leq \epsilon/2$ and $y_{2D/\epsilon}\in \Gamma_\epsilon$. Choose arbitrary $k_0\in \N$. We may assume 
\[
\sup_\Omega u_{k_0}\geq \sup_{\Gamma_\epsilon}F. 
\]
Choose $x_0\in \Omega$ such that
\[
u_{k_0}(x_0)>\left(1-\frac12 \left(\frac\alpha2\right)^{2D/\epsilon}\right)\sup_\Omega u_{k_0}.
\] 
Let $(x_j)^{J}_{j=0}\subset \Omega_\epsilon$ be a sequence for which $\left|x_{j+1}-x_j\right|\leq\epsilon/2$, $x_J\in \Gamma_\epsilon$ and $J\leq 2D/\epsilon$. By using the rough estimates
\[
\sup_{\Omega_\epsilon}u_{k_0-1}\leq \sup_\Omega u_{k_0},
\] 

\[
\inf_{B_\epsilon(x_j)}u_{k_0-1}\leq u_{k_0}(x_j)
\]
and DPP we obtain
\[
u_{k_0}(x_0)\leq \left(\frac\alpha2+\beta\right)\sup_\Omega u_{k_0}+\frac\alpha2 u_{k_0}(x_1)+\epsilon^2 N.
\] 
Repeating this estimate for the values $u_{k_0}(x_j)$, $j\in\left\{1,...,J\right\}$, we get
\begin{align*}
u_{k_0}(x_0)&\leq \left(\frac\alpha2+\beta\right)\sup_\Omega u_{k_0}\sum^{2D/\epsilon}_{j=0}\left(\frac\alpha2\right)^j+\epsilon^2N\sum^{2D/\epsilon}_{j=0}\left(\frac\alpha2\right)^j\\
& \leq \left(1-\left(\frac\alpha2\right)^{2D/\epsilon}\right)\sup_\Omega u_{k_0}+2\epsilon^2N.
\end{align*}
Remembering how $x_0$ was chosen, we have
\[
\sup_\Omega u_{k_0}\leq 4\epsilon^2N\left(\frac2\alpha\right)^{2D/\epsilon}.
\]
Since $k_0$ was arbitrary and the right hand side does not depend on $k_0$, the sequence $(u_k)$ is bounded. Hence it converges pointwise to a bounded, Borel measurable limit function $u$. We show that the convergence is uniform. Suppose not. Since a sequence $\sup_{\Omega_\epsilon}(u-u_k)$ is positive, decreasing and bounded, we have
\[
M=\lim_{k}\sup_{\Omega_\epsilon}(u-u_k)>0.
\]
If $1\leq k_1\leq k_2$, we have
\[
\max\left\{\sup_{B_\epsilon (x)}u_{k_2}-\sup_{B_\epsilon (x)}u_{k_1},\inf_{B_\epsilon (x)}u_{k_2}-\inf_{B_\epsilon (x)}u_{k_1}\right\}\leq \sup_{B_\epsilon (x)}\left(u-u_{k_1}\right)
\]
for all $x\in \Omega$. Using DPP we estimate
\begin{equation}\label{delta}
u_{k_2+1}(x)-u_{k_1+1}(x)\leq \alpha \sup_{B_\epsilon(x_0)}(u-u_{k_1})+\beta \vint_{B_\epsilon(x)}(u-u_{k_1})dz. 
\end{equation}
Fix $\delta>0$. Select $k_1$ such that
\[
\sup_{\Omega_\epsilon}(u-u_{k_1})<M+\delta
\]
and
\[
\sup_{x\in \Omega}\beta \vint_{B_\epsilon(x)}(u-u_{k_1})dz\leq \delta.
\]
Then pick $y\in \Omega$ such that $u(y)-u_{k_1}(y)> M-\delta$, and finally $k_2\geq k_1$ such that $u(y)-u_{k_2}(y)<\delta$. Then
\[
u_{k_2+1}(y)-u_{k_1+1}(y)>M-2\delta,
\]
and since $\alpha<1$, the estimate \eqref{delta} contradicts the assumption $M>0$ when $\delta$ is small enough. Since the convergence is uniform, the limit function $u$ satisfies DPP \eqref{dynaamista}.

In the proof of uniqueness the running payoff plays a minor role, so we just explain the ideas and refer to the proof of \cite[Theorem 2.2]{ex} for details. Assume that $u$ and $v$ are defined in $\Omega_\epsilon$, satisfy DPP in $\Omega$ and $u=F=v$ on $\Gamma_\epsilon$. Assume that $u(y)>v(y)$ for some $y\in \Omega$. Since $u-v$ is bounded, we have
\[
\sup_{\Omega}(u-v)=:M>0.
\]    
Using DPP, we can estimate
\begin{align*}
u(x)-v(x)&\leq \frac\alpha2\left(\sup_{B_\epsilon (x)}u(x)-\sup_{B_\epsilon (x)}v(x)\right)+\frac{\alpha}{2}\left(\inf_{B_\epsilon (x)}u(x)-\inf_{B_\epsilon (x)}v(x)\right)\\
&\phantom{{}=\frac\alpha2}+\beta\vint_{B_\epsilon (x)}(u-v)dz+f(x)-f(x)\notag\\
& \leq \alpha M+\beta\vint_{B_\epsilon (x)}(u-v)dz.
\end{align*}
Because of absolute continuity of the integral, a set 
\[
G:=\left\{x: u(x)-v(x)=M\right\} 
\]
is non-empty, and if $x_0\in G$, then $u-v=M$ almost everywhere in a ball $B_\epsilon(x_0)$. This contradicts the assumption that $G$ is bounded. A similar contradiction follows if $v(y)>u(y)$ for some $y\in \Omega$. Hence $u=v$ in $\Omega_\epsilon$. 
\end{proof}

\begin{lem}\label{DPP2}
Given the payoffs and $\epsilon>0$, the tug-of-war with noise and running payoff has a unique value function $u_\epsilon:=u^\epsilon_\text{I}=u^\epsilon_\text{II}$ which satisfies DPP \eqref{dynaamista} in $\Omega$.
\end{lem}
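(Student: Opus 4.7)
My plan is to use the unique Borel measurable DPP-solution $u_\epsilon$ from Lemma~\ref{DPP1} as a candidate value and squeeze the game value between it on both sides. Since $u^\epsilon_\text{I} \leq u^\epsilon_\text{II}$ holds by definition, it suffices to establish $u_\epsilon \leq u^\epsilon_\text{I}$ and $u^\epsilon_\text{II} \leq u_\epsilon$; together these force $u_\epsilon = u^\epsilon_\text{I} = u^\epsilon_\text{II}$ and the uniqueness of the DPP-solution transfers to uniqueness of the value.

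First I would prove $u_\epsilon \leq u^\epsilon_\text{I}$. Fix $\eta>0$ and let Max play the $\eta$-optimal strategy $S^{*}_\text{I}$: at step $k$, from position $x_k$, move to a point $y \in B_\epsilon(x_k)$ with
\[
u_\epsilon(y) \geq \sup_{B_\epsilon(x_k)} u_\epsilon - \eta\, 2^{-k-1}.
\]
For any Minnie strategy $S_\text{II}$, consider the stopped process
\[
M_k := u_\epsilon(x_{k\wedge\tau}) + \epsilon^2 \sum_{j=0}^{(k\wedge\tau)-1} f(x_j) + \eta \sum_{j=1}^{k\wedge\tau} 2^{-j}.
\]
Combining the DPP \eqref{dynaamista} at $x_k$ with Max's near-optimal choice (using the trivial bound $u_\epsilon(\text{Minnie's move}) \geq \inf_{B_\epsilon(x_k)} u_\epsilon$ and exact equality in expectation for the noise step) shows that $(M_k)$ is a submartingale with bounded increments. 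Since $u_\epsilon$ and $f$ are bounded and $\mathbb{E}^{x_0}[\tau] < \infty$ (by the same geometric argument on noise steps used implicitly in Lemma~\ref{DPP1}, which exploits $\beta > 0$), optional stopping together with $u_\epsilon = F$ on $\Gamma_\epsilon$ yields
\[
u_\epsilon(x_0) = M_0 \leq \mathbb{E}^{x_0}_{S^{*}_\text{I}, S_\text{II}}[M_\tau] \leq \mathbb{E}^{x_0}_{S^{*}_\text{I}, S_\text{II}}\!\left[F(x_\tau) + \epsilon^2 \sum_{j=0}^{\tau-1} f(x_j)\right] + \eta.
\]
Taking the infimum over $S_\text{II}$, the supremum over Max's strategies, and letting $\eta \to 0$ gives $u_\epsilon \leq u^\epsilon_\text{I}$.

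The reverse inequality $u^\epsilon_\text{II} \leq u_\epsilon$ is obtained by a symmetric argument: Minnie plays an $\eta$-optimal strategy $S^{*}_\text{II}$ aiming at near-infimum points of $u_\epsilon$, and the analogue of $M_k$ (with the $\eta$-correction subtracted rather than added) becomes a supermartingale under any Max strategy, producing the matching upper bound on the expected total payoff. The main technical obstacle is justifying optional stopping at the unbounded random time $\tau$; this is handled by combining the uniform boundedness of the increments $|M_{k+1}-M_k|$ (coming from boundedness of $u_\epsilon$ and of $f$) with $\mathbb{E}[\tau] < \infty$, which in turn follows from the positive noise probability $\beta$ forcing geometric decay of $\mathbb{P}(\tau > k)$.
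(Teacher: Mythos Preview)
Your proposal is correct and follows essentially the same route as the paper: invoke the unique DPP solution from Lemma~\ref{DPP1}, note the trivial inequality $u^\epsilon_\text{I}\leq u^\epsilon_\text{II}$, and sandwich both game values against $u_\epsilon$ by letting the relevant player follow an $\eta$-optimal strategy for $u_\epsilon$ so that the running-payoff-augmented process becomes a sub-/supermartingale to which optional stopping applies. The only cosmetic differences are your explicit stopping of the process and the additive form $+\eta\sum 2^{-j}$ in place of the paper's $-\eta 2^{-k}$, which are equivalent up to a constant; your more careful discussion of why optional stopping is legitimate (bounded increments and $\mathbb{E}\tau<\infty$ via $\beta>0$) is a welcome addition that the paper leaves implicit.
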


\begin{proof}
By Lemma \ref{DPP1}, there is a unique function $u_\epsilon$, $u_\epsilon=F$ in $\Gamma_\epsilon$, satisfying DPP \eqref{dynaamista}. We show that
\[
u^\epsilon_\text{II}\leq u_\epsilon\leq u^\epsilon_\text{I}\leq u^\epsilon_\text{II}.
\]
Since 
\[
\sup_{S_\text{I}}\mathbb{E}^{x_0}_{S_\text{I},S_\text{II}}\left[F(x_\tau)+\epsilon^2\sum^{\tau-1}_{i=0}f(x_i)\right]\geq u^\epsilon_\text{I}
\] 
for all strategies $S_\text{II}$, we get $u^\epsilon_\text{I}\leq u^\epsilon_\text{II}$. 

Next we show that $u_\epsilon\leq u^\epsilon_\text{I}$. Max follows a strategy $S^{\text{Max}}_\text{I}$ in which from $x_{k-1}$ he steps to a point $x_k\in B_\epsilon (x_{k-1})$ so that for fixed $\eta>0$
\[
u_\epsilon(x_k)\geq \sup_{B_\epsilon (x_{k-1})} u_\epsilon-\eta 2^{-k}.
\]
Minnie uses a strategy $S_\text{II}$. Using DPP for $u_\epsilon$ at a point $x_{k-1}$, we estimate
\begin{align*}
\mathbb{E}^{x_0}_{S^\text{Max}_\text{I},S_\text{II}}&[u_\epsilon(x_k)+\epsilon^2\sum^{k-1}_{i=0}f(x_i)-\eta 2^{-k}|\mathcal{F}_{k-1}]\\
& \geq \frac{\alpha}{2}\left\{\sup_{B_\epsilon(x_{k-1})}u_\epsilon-\eta 2^{-k}+\inf_{B_\epsilon(x_{k-1})}u_\epsilon\right\}\\
&\phantom{{}=\frac{\alpha}{2}}+\beta \vint_{B_\epsilon(x_{k-1})}u_\epsilon dy+\epsilon^2f(x_{k-1})+\epsilon^2\sum^{k-2}_{i=0}f(x_i)-\eta 2^{-k}\notag\\
& =u_\epsilon(x_{k-1})+\epsilon^2\sum^{k-2}_{i=0}f(x_i)-\eta 2^{-k}(1+\alpha/2)\\
& \geq u_\epsilon(x_{k-1})+\epsilon^2\sum^{k-2}_{i=0}f(x_i)-\eta 2^{-(k-1)}.
\end{align*}
Therefore the process 
\[
M_k:=u_\epsilon(x_k)+\epsilon^2\sum^{k-1}_{i=0}f(x_i)-\eta 2^{-k} 
\]
for $k\geq 1$, $M_0=u_\epsilon(x_0)-\eta$, is a submartingale with respect to the strategies $S^\text{Max}_\text{I}$ and $S_\text{II}$. Using the Optional Stopping Theorem we obtain 
\begin{align*}
u^\epsilon_{\text{I}}(x_0)&=\sup_{S_\text{I}}\inf_{S_\text{II}}\mathbb{E}^{x_0}_{S_\text{I},S_\text{II}}\left[F(x_\tau)+\epsilon^2\sum^{\tau-1}_{i=0}f(x_i)\right]\\
& \geq \inf_{S_\text{II}}\mathbb{E}^{x_0}_{S^{\text{Max}}_\text{I},S_\text{II}}\left[F(x_\tau)+\epsilon^2\sum^{\tau-1}_{i=0}f(x_i)-\eta 2^{-\tau}\right]\\
& =\inf_{S_\text{II}}\mathbb{E}^{x_0}_{S^{\text{Max}}_\text{I},S_\text{II}}\left[u_\epsilon(x_\tau)+\epsilon^2\sum^{\tau-1}_{i=0}f(x_i)-\eta 2^{-\tau}\right]\\
& \geq \inf_{S_\text{II}}\mathbb{E}^{x_0}_{S^{\text{Max}}_\text{I},S_\text{II}}\left[u_\epsilon (x_0)-\eta\right]\\
& = u_\epsilon (x_0)-\eta.
\end{align*}
Since $\eta>0$ was arbitrary, we have $u^\epsilon_{\text{I}}(x_0)\geq u_\epsilon (x_0)$. Inequality 
\[
u^\epsilon_{\text{II}}(x_0)\leq u_\epsilon (x_0) 
\]
follows from symmetric argument, so
\[
u_\epsilon=u^\epsilon_\text{I}=u^\epsilon_\text{II}
\]
in $\Omega$. Hence $u_\epsilon$ is the value of the game.
\end{proof}

The next two lemmas are useful tools in estimating the value function.

\begin{lem}\label{2.4} Let $\tau$ be the stopping time of the game and let $\tau^*\leq \tau$ be a stopping time with respect to the filtration $\mathcal{F}_k$. Then
\[
u_\epsilon(y)\geq \inf_{S_\text{\emph{II}}}\mathbb{E}^y_{S^\text{\emph{Max}}_\text{\emph{I}},S_\text{\emph{II}}}\left[u_\epsilon(x_{\tau^*})+\epsilon^2\sum^{\tau^*-1}_{i=0}f(x_i)\right]
\] 
for any fixed strategy $S^\text{Max}_\text{\emph{I}}$, and
\[
u_\epsilon(y)\leq \sup_{S_\text{\emph{I}}}\mathbb{E}^y_{S_\text{\emph{I}},S^\text{\emph{Min}}_\text{\emph{II}}}\left[u_\epsilon(x_{\tau^*})+\epsilon^2\sum^{\tau^*-1}_{i=0}f(x_i)\right]
\]
for any fixed strategy $S^\text{Min}_\text{\emph{II}}$.
\end{lem}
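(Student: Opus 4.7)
The plan is to mimic the submartingale/supermartingale construction already used in the proof of Lemma \ref{DPP2}, except that Optional Stopping will be applied at the general stopping time $\tau^{*}\leq \tau$ rather than at $\tau$ itself. I prove the first inequality; the second one is symmetric.

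First I would fix $\eta>0$ and design a specific Minnie strategy $S^{\text{Min}}_{\text{II}}$ that is almost optimal in the following sense: from $x_{k-1}$, Minnie selects a point $S^{\text{Min}}_{\text{II}}(x_{k-1})\in B_\epsilon(x_{k-1})$ with
\[
u_\epsilon(S^{\text{Min}}_{\text{II}}(x_{k-1}))\leq \inf_{B_\epsilon(x_{k-1})} u_\epsilon+\eta\,2^{-k}.
\]
Against Max's fixed strategy $S^{\text{Max}}_{\text{I}}$, which satisfies $u_\epsilon(S^{\text{Max}}_{\text{I}}(x_{k-1}))\leq \sup_{B_\epsilon(x_{k-1})} u_\epsilon$ trivially, DPP \eqref{dynaamista} yields
\[
\mathbb{E}^{y}_{S^{\text{Max}}_{\text{I}},S^{\text{Min}}_{\text{II}}}[u_\epsilon(x_k)\mid \mathcal{F}_{k-1}]\leq u_\epsilon(x_{k-1})-\epsilon^2 f(x_{k-1})+\tfrac{\alpha}{2}\eta\,2^{-k}.
\]
Then I define
\[
M_k:=u_\epsilon(x_k)+\epsilon^2\sum_{i=0}^{k-1}f(x_i)+\eta\,2^{-k},
\]
and from the previous inequality together with $1+\alpha/2\leq 2$ conclude that $\mathbb{E}[M_k\mid \mathcal{F}_{k-1}]\leq M_{k-1}$, so that $(M_k)$ is a supermartingale relative to the filtration $(\mathcal{F}_k)$ under the measure $\mathbb{P}^{y}_{S^{\text{Max}}_{\text{I}},S^{\text{Min}}_{\text{II}}}$.

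Next I would invoke the Optional Stopping Theorem at $\tau^{*}$. The hypotheses are easy to check: $u_\epsilon$ is bounded thanks to Lemma \ref{DPP1}, the probability $\beta>0$ of a random step forces $\mathbb{E}[\tau]<\infty$ (hence $\mathbb{E}[\tau^{*}]<\infty$), and $|M_k-M_{k-1}|\leq 2\sup|u_\epsilon|+\epsilon^2\sup_\Omega f+\eta$ provides the uniform one-step bound. Applying OST gives $\mathbb{E}^{y}_{S^{\text{Max}}_{\text{I}},S^{\text{Min}}_{\text{II}}}[M_{\tau^{*}}]\leq M_0=u_\epsilon(y)+\eta$, and since $\eta\,2^{-\tau^{*}}\geq 0$,
\[
\mathbb{E}^{y}_{S^{\text{Max}}_{\text{I}},S^{\text{Min}}_{\text{II}}}\!\!\left[u_\epsilon(x_{\tau^{*}})+\epsilon^2\sum_{i=0}^{\tau^{*}-1}f(x_i)\right]\leq u_\epsilon(y)+\eta.
\]
Taking the infimum over $S_{\text{II}}$ on the left, which can only decrease the expectation further, and then sending $\eta\to 0$, the first asserted inequality follows.

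The dual inequality is obtained by interchanging the roles: now $S^{\text{Min}}_{\text{II}}$ is fixed, Max plays an $\eta\,2^{-k}$-optimal strategy realizing the sup, and the corresponding process $M_k':=u_\epsilon(x_k)+\epsilon^2\sum_{i=0}^{k-1}f(x_i)-\eta\,2^{-k}$ becomes a submartingale under $\mathbb{P}^{y}_{S^{\text{Max}}_{\text{I}},S^{\text{Min}}_{\text{II}}}$ by the symmetric calculation. OST then yields $\mathbb{E}[M'_{\tau^{*}}]\geq u_\epsilon(y)-\eta$, and taking $\sup_{S_{\text{I}}}$ followed by $\eta\to 0$ gives the second inequality. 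The only genuinely delicate step is the verification of the OST hypotheses at the general stopping time $\tau^{*}$; once boundedness of $u_\epsilon$ and finiteness of $\mathbb{E}[\tau]$ are in hand, the rest is a routine adaptation of the argument in Lemma \ref{DPP2}.
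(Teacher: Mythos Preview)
Your proof is correct and follows essentially the same approach as the paper: construct Minnie's $\eta 2^{-k}$-optimal strategy, verify that $M_k=u_\epsilon(x_k)+\epsilon^2\sum_{i=0}^{k-1}f(x_i)+\eta 2^{-k}$ is a supermartingale via DPP, apply Optional Stopping at $\tau^*$, and let $\eta\to 0$. The only difference is that you spell out the verification of the OST hypotheses (boundedness of $u_\epsilon$, $\mathbb{E}[\tau]<\infty$, uniform increment bound), which the paper leaves implicit.
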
 
 
\begin{proof}
We only prove the first inequality, since the second follows from similar argument. Max has fixed a strategy $S^\text{Max}_\text{I}$. Let $\eta>0$. Minnie follows a strategy $S^\text{Min}_{\text{II}}$ in which from $x_{k-1}\in \Omega$ she steps to a point $x_k\in B_\epsilon(x_{k-1})$ in which
\[
u_\epsilon(x_k)\leq \inf_{B_\epsilon(x_{k-1})}u_\epsilon+\eta 2^{-k}.
\]
Let us first prove that
\[
M_k:=u_\epsilon(x_k)+\epsilon^2\sum^{k-1}_{i=0}f(x_i)+\eta 2^{-k}
\]
for $k\geq 1$, $M_0=u_\epsilon(x_0)+\eta$, is a supermartingale under the strategies $S^\text{Max}_\text{I}$ and $S^{\text{Min}}_\text{II}$. 
\begin{align*}
\mathbb{E}^{x_0}_{S^\text{Max}_\text{I},S^\text{Min}_\text{II}}&[M_k|\mathcal{F}_{k-1}]\\
& \leq \frac{\alpha}{2}\left\{\sup_{B_\epsilon (x_{k-1})}u_\epsilon+\inf_{B_\epsilon (x_{k-1})}u_\epsilon+\eta 2^{-k}\right\}\\
&\phantom{{}=\frac{\alpha}{2}}+\beta\vint_{B_\epsilon (x_{k-1})} u_\epsilon dy +\epsilon^2\sum^{k-1}_{i=0}f(x_i)+\eta 2^{-k}\notag\\
& \leq u_\epsilon (x_{k-1})+\epsilon^2\sum^{k-2}_{i=0}f(x_i)+\eta 2^{-(k-1)}=M_{k-1}.
\end{align*}
Hence $M_k$ is a supermartingale, and we get
\begin{align*} \inf_{S_\text{II}}\ &\mathbb{E}^y_{S^\text{Max}_\text{I},S_{\text{II}}}\left[u_\epsilon(x_{\tau^*})+\epsilon^2\sum^{\tau^*-1}_{i=0}f(x_i)\right]\\
& \leq \mathbb{E}^y_{S^\text{Max}_\text{I},S^\text{Min}_{\text{II}}}\left[u_\epsilon(x_{\tau^*})+\epsilon^2\sum^{\tau^*-1}_{i=0}f(x_i)+\eta 2^{-\tau^*}\right]\\
& \leq \mathbb{E}^y_{S^\text{Max}_\text{I},S^\text{Min}_{\text{II}}}[M_0]=u_\epsilon (y)+\eta.
\end{align*}
Since $\eta>0$ was arbitrary, the result follows.
\end{proof}

\begin{lem}
If $v_\epsilon$ and $u_\epsilon$ are value functions with payoff functions $f_v$ and $F_v$ for $v_\epsilon$, $f_u$ and $F_u$ for $u_\epsilon$, and $f_v\geq f_u$, $F_v\geq F_u$, then $v_\epsilon\geq u_\epsilon$.
\end{lem}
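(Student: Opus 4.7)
The plan is to argue directly from the game-theoretic representation afforded by Lemma \ref{DPP2}, rather than through the dynamic programming principle. The crucial point is that, for a fixed starting point $x_0$ and a fixed pair of strategies $(S_\text{I},S_\text{II})$, the probability space $H^\infty$, the probability measure $\mathbb{P}^{x_0}_{S_\text{I},S_\text{II}}$, the trajectory $(x_i(\omega))$ and the stopping time $\tau(\omega)$ are all determined by the rules of the game and the strategies alone — they do not depend on which payoff functions we use. Hence the same random objects can be plugged into either the $u$-integrand or the $v$-integrand.

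Concretely, first I would observe that because $F_v\geq F_u$ on $\Gamma_\epsilon$ and $f_v\geq f_u$ on $\Omega$, we have the pointwise inequality
\[
F_v(x_\tau(\omega))+\epsilon^2\sum^{\tau(\omega)-1}_{i=0}f_v(x_i(\omega))\;\geq\;F_u(x_\tau(\omega))+\epsilon^2\sum^{\tau(\omega)-1}_{i=0}f_u(x_i(\omega))
\]
for every $\omega\in H^\infty$ (note that $\tau(\omega)<\infty$ almost surely because $\beta>0$, and that each summand is nonnegative). Integrating with respect to the common measure $\mathbb{P}^{x_0}_{S_\text{I},S_\text{II}}$ gives the corresponding inequality between the expected total payoffs for the two games, under the same strategies.

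Next, since the inequality holds for every choice of $(S_\text{I},S_\text{II})$, I take $\inf_{S_\text{II}}$ on both sides (which preserves the inequality) and then $\sup_{S_\text{I}}$ on both sides, obtaining
\[
\sup_{S_\text{I}}\inf_{S_\text{II}}\mathbb{E}^{x_0}_{S_\text{I},S_\text{II}}\!\left[F_v(x_\tau)+\epsilon^2\!\sum^{\tau-1}_{i=0}\!f_v(x_i)\right]\geq\sup_{S_\text{I}}\inf_{S_\text{II}}\mathbb{E}^{x_0}_{S_\text{I},S_\text{II}}\!\left[F_u(x_\tau)+\epsilon^2\!\sum^{\tau-1}_{i=0}\!f_u(x_i)\right],
\]
which by Lemma \ref{DPP2} is exactly $v_\epsilon(x_0)\geq u_\epsilon(x_0)$ for every $x_0\in\Omega$. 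On $\Gamma_\epsilon$ the inequality $v_\epsilon=F_v\geq F_u=u_\epsilon$ is immediate.

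There is no serious obstacle here; the only thing that requires mild care is the assertion that the trajectory and $\tau$ are independent of the payoffs, together with applying monotonicity of $\sup$ and $\inf$ to preserve the inequality. (As an alternative one could rerun the monotone iteration used in the proof of Lemma \ref{DPP1} starting from $u_0^{(v)}:=\inf_{\Gamma_\epsilon}F_v\geq\inf_{\Gamma_\epsilon}F_u=:u_0^{(u)}$ and verify by induction that $u_k^{(v)}\geq u_k^{(u)}$, but the game-theoretic argument above is cleaner and does not require revisiting the DPP construction.)
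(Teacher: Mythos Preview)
Your argument is correct and takes a genuinely different, more elementary route than the paper. The paper fixes for Minnie a near-optimal strategy for the $v$-game (stepping toward the infimum of $v$), uses the DPP satisfied by $v$ together with $f_v\geq f_u$ to show that $M_k=v(x_k)+\epsilon^2\sum_{i=0}^{k-1}f_u(x_i)+\eta 2^{-k}$ is a supermartingale for any strategy of Max, and then applies optional stopping and $F_v\geq F_u$ to bound $u_\epsilon(x_0)$ by $v_\epsilon(x_0)+\eta$. Your approach bypasses both the DPP and the martingale machinery by exploiting the observation that the law of the trajectory under fixed strategies is payoff-independent, so the comparison reduces to monotonicity of integration followed by monotonicity of $\inf$ and $\sup$. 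This is cleaner and shorter; the paper's approach has the virtue of being uniform with the techniques used in Lemmas~\ref{DPP2} and~\ref{2.4}, and it never needs to contemplate whether $\mathbb{E}^{x_0}_{S_\text{I},S_\text{II}}[\tau]$ is finite for arbitrary (possibly bad) strategy pairs---a point your argument implicitly handles since the pointwise inequality between nonnegative integrands survives integration even if both sides are $+\infty$, and the final $\sup\inf$ values are finite by Lemma~\ref{DPP2}.
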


\begin{proof}
Max plays with a strategy $S_\text{I}$ and Minnie follows a strategy $S^\text{Min}_{\text{II}}$ in which from $x_{k-1}\in \Omega$ she steps to a point $x_k\in B_\epsilon(x_{k-1})$ in which
\[
v(x_k)\leq \inf_{B_\epsilon(x_{k-1})}v+\eta2^{-k}
\]
for some fixed $\eta>0$. Then
\begin{align*}
\mathbb{E}^{x_0}_{S_\text{I},S^\text{Min}_{\text{II}}}&\left[v(x_k)+\epsilon^2\sum^{k-1}_{i=0}f_u(x_i)+\eta2^{-k}\ |\mathcal{F}_{k-1}\right]\\
& \leq \frac{\alpha}{2}\left\{\inf_{B_\epsilon(x_{k-1})}v+\eta2^{-k}+\sup_{B_\epsilon(x_{k-1})}v\right\}+\beta\vint_{B_\epsilon(x_{k-1})}vdy\\
&\phantom{{}=\frac{\alpha}{2}}+\epsilon^2\sum^{k-1}_{i=0}f_u(x_i)+\eta2^{-k}\notag\\
& \leq v(x_{k-1})+\epsilon^2\sum^{k-1}_{i=0}f_u(x_i)-\epsilon^2f_v (x_{k-1})+\eta2^{-(k-1)}\\
& \leq v(x_{k-1})+\epsilon^2\sum^{k-2}_{i=0}f_u(x_i)+\eta2^{-(k-1)},
\end{align*}
since $v$ is a value function and $f_v\geq f_u$. Thus
\[
M_k=v(x_k)+\epsilon^2\sum^{k-1}_{i=0}f_u(x_i)+\eta2^{-k}
\]
for $k\geq 1$, $M_0=v_\epsilon(x_0)+\eta$, is a supermartingale. Since $F_v\geq F_u$ on $\Gamma_\epsilon$, we deduce by the Optional Stopping Theorem that
\begin{align*}
u_\epsilon (x_0)&= \inf_{S_{\text{II}}}\sup_{S_{\text{I}}}\mathbb{E}^{x_0}_{S_{\text{I}},S_{\text{II}}}\left[F_u(x_\tau)+\epsilon^2\sum^{\tau-1}_{i=0}f_u (x_i)\right]\\
& \leq \sup_{S_{\text{I}}}\mathbb{E}^{x_0}_{S_{\text{I}},S^\text{Min}_{\text{II}}}\left[F_v (x_\tau)+\epsilon^2\sum^{\tau-1}_{i=0}f_u (x_i)+\eta2^{-\tau}\right]\\
& \leq \sup_{S_{\text{I}}}\mathbb{E}^{x_0}_{S_{\text{I}},S^\text{Min}_{\text{II}}}[M_0]=v(x_0)+\eta.
\end{align*}
Since $\eta$ was arbitrary, this proves the claim.
\end{proof}

\section{Stopping time estimates and regularity lemmas}\label{sec 3}
Recall that since the running payoff is positive, the value function $u_\epsilon$ is bounded from below by $\inf_{\Gamma_\epsilon}F$. In the proof of Lemma \ref{DPP1} we saw that $u_\epsilon$ is bounded from above by
\[
\max\left\{\sup_{\Gamma_\epsilon}F,\ 4\epsilon^2\left(\frac2\alpha\right)^{2\diam(\Omega)/\epsilon}\sup_\Omega f\right\}.
\]
Unfortunately, this upper bound depends on $\epsilon$. Using the lemmas of section \ref{sec 2}, we can now show that the value functions $u_\epsilon$ for different $\epsilon$ are uniformly bounded.  The idea is to fix for Minnie a strategy in which she tries to push the token to a certain boundary point. No matter which strategy Max uses, the expected value of the stopping time can be estimated so that the total effect of the running payoff is under control.

\begin{lem}\label{maks} For given payoffs $F$ and $f$, there is $C>0$, independent of $\epsilon$, such that
\[
u_\epsilon\leq C(\sup_{\Gamma_\epsilon}F+\sup_{\Omega}f).
\]
\end{lem}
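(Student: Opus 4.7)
The plan is to fix a specific strategy $S^{\mathrm{Min}}_{\mathrm{II}}$ for Minnie and exploit the identity $u_\epsilon = u^\epsilon_{\mathrm{II}}$ to obtain
\[
u_\epsilon(x_0) \le \sup_{S_{\mathrm{I}}} \mathbb{E}^{x_0}_{S_{\mathrm{I}},S^{\mathrm{Min}}_{\mathrm{II}}}\!\left[F(x_\tau) + \epsilon^2 \sum_{i=0}^{\tau-1} f(x_i)\right] \le \sup_{\Gamma_\epsilon} F + (\sup_\Omega f)\,\epsilon^2 \sup_{S_{\mathrm{I}}} \mathbb{E}^{x_0}_{S_{\mathrm{I}},S^{\mathrm{Min}}_{\mathrm{II}}}[\tau].
\]
Everything then reduces to choosing $S^{\mathrm{Min}}_{\mathrm{II}}$ so that $\epsilon^2 \mathbb{E}[\tau]$ is bounded by a constant independent of $\epsilon$ and of Max's strategy. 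The natural device is a nonnegative Lyapunov function $\psi$ on $\Omega_\epsilon$ playing the role of a game-theoretic supersolution of a constant right-hand side, under which $\psi(x_k)+k\epsilon^2$ becomes a supermartingale and optional stopping yields $\epsilon^2 \mathbb{E}[\tau] \le \psi(x_0)$.

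Fix $x_c \in \R^n$ with $\operatorname{dist}(x_c,\Omega) \ge \diam(\Omega)+\epsilon$ (so in particular $|x-x_c| \ge \epsilon$ for every $x\in\Omega$), set $D := \sup_{x\in\Omega_\epsilon}|x-x_c|$ and $C_1 := \alpha + \beta\,\tfrac{n}{n+2}>0$, and define
\[
\psi(x) := \frac{1}{C_1}\bigl(D^2 - |x-x_c|^2\bigr), \qquad x\in\Omega_\epsilon,
\]
which is nonnegative on $\Omega_\epsilon$ and bounded by $D^2/C_1 = O(\diam(\Omega)^2)$ uniformly in $\epsilon$. Because $x_c \notin \overline{B_\epsilon(x)}$ for every $x\in\Omega$, the extrema of $\psi$ on $\overline{B_\epsilon(x)}$ are attained at $x \mp \epsilon(x-x_c)/|x-x_c|$, and a direct expansion of $|x+\eta-x_c|^2$ together with $\vint_{B_\epsilon(0)}|\eta|^2\,d\eta = \epsilon^2 n/(n+2)$ produces the key DPP-identity
\[
\frac{\alpha}{2}\left\{\sup_{B_\epsilon(x)}\psi + \inf_{B_\epsilon(x)}\psi\right\} + \beta \vint_{B_\epsilon(x)} \psi\,dy \;=\; \psi(x) - \epsilon^2, \qquad x\in\Omega.
\]
Let $S^{\mathrm{Min}}_{\mathrm{II}}$ be Minnie's deterministic strategy of stepping from $x_{k-1}$ to the minimizer $x_{k-1} + \epsilon(x_{k-1}-x_c)/|x_{k-1}-x_c|$ of $\psi$ on $\overline{B_\epsilon(x_{k-1})}$; this is a push in the fixed outward direction from $x_c$, i.e.\ toward a specific portion of $\partial\Omega$, in the spirit of the author's hint.

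Because Minnie realizes $\inf\psi$ exactly while Max can reach at most $\sup\psi$, the identity yields
\[
\mathbb{E}^{x_0}_{S_{\mathrm{I}},S^{\mathrm{Min}}_{\mathrm{II}}}[\psi(x_k)\mid\mathcal{F}_{k-1}] \le \psi(x_{k-1}) - \epsilon^2
\]
for every $S_{\mathrm{I}}$, so $M_k := \psi(x_k)+k\epsilon^2$ is a supermartingale. Applying the optional stopping theorem to the bounded stopping times $\tau \wedge N$ and letting $N\to\infty$ by monotone convergence (using $\psi\ge 0$), we obtain $\epsilon^2\mathbb{E}[\tau] \le \psi(x_0) \le D^2/C_1$ uniformly in $S_{\mathrm{I}}$, and inserting this into the initial display yields the stated bound with $C=\max(1, D^2/C_1)$ depending only on $n$, $p$, and $\diam(\Omega)$. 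The main obstacle is engineering the identity with the correct sign and normalization: the constant $1/C_1$ is forced precisely by the cancellation of the tug-of-war and random-walk contributions into exactly $-\epsilon^2$, and the placement of $x_c$ outside an $\epsilon$-neighborhood of $\Omega$ is essential—otherwise $\sup_{B_\epsilon(x)}\psi$ would be attained at the interior maximum $x_c$ for $x$ near $x_c$, degrading the equality into a strict inequality that breaks the supermartingale property. (For $\epsilon \gtrsim \diam(\Omega)$ the game terminates in an a priori bounded number of steps from every starting point, so the estimate is immediate in that regime.)
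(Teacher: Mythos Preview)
Your proof is correct and follows the same high-level scheme as the paper: fix for Minnie a strategy that pushes toward a fixed direction, build a Lyapunov function $\psi$ so that $\psi(x_k)+ck\epsilon^2$ is a supermartingale, and apply optional stopping to get $\epsilon^2\mathbb{E}[\tau]\le C$ uniformly in Max's strategy. The difference is in the choice of barrier. The paper uses the radial solution of $\Delta v=-2(n+2)$ on an annulus $B_{R+\epsilon}(z)\setminus\overline B_r(z)$ with Neumann data on the outer sphere, cites \cite[Lemma~4.5]{prop} for its radial concavity and the exact mean-value identity $v(x)=\vint_{B_\epsilon(x)}v+\epsilon^2$, and then combines concavity (which handles the $\alpha$-part) with that identity (which handles the $\beta$-part). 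Your quadratic $\psi(x)=C_1^{-1}(D^2-|x-x_c|^2)$ achieves the same effect by a direct one-line computation, producing the clean DPP identity $\tfrac{\alpha}{2}(\sup\psi+\inf\psi)+\beta\vint\psi=\psi-\epsilon^2$; this makes the argument more elementary and self-contained, and also spares you the auxiliary stopping time $\tau^*$ that the paper introduces to keep the token inside the annulus.

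One small technical point: since moves are to the \emph{open} ball $B_\epsilon(x_{k-1})$, Minnie cannot literally step to the boundary minimizer $x_{k-1}+\epsilon(x_{k-1}-x_c)/|x_{k-1}-x_c|$. The standard fix (used throughout the paper) is to let her step to a point with $\psi(x_k)\le\inf_{B_\epsilon(x_{k-1})}\psi+\eta_k$ for some summable slack, say $\eta_k=\tfrac12\epsilon^2$; then $\psi(x_k)+\tfrac12 k\epsilon^2$ is still a supermartingale and the rest goes through with a harmless factor of $2$. Also, your parenthetical about $\epsilon\gtrsim\diam(\Omega)$ is not quite right as stated (a bounded number of steps does not by itself control $\epsilon^2\mathbb{E}[\tau]$), but this regime is irrelevant for the paper's purposes and your main argument already gives a bound uniform on any bounded range of $\epsilon$, which is all that is actually used later.
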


\begin{proof}
Fix $\epsilon>0$ and let $x_0\in \Omega$. Choose $z\in \R^n\setminus \Omega_\epsilon$, then $r>0$ such that $B_r(z)\subset \R^n\setminus\Omega_\epsilon$, and finally $R>0$ such that $\Omega_\epsilon\subset B_{R/2}(z)$. Let $v$ be a solution to the problem
\begin{displaymath}
\left\{ \begin{array}{ll}
\Delta v=-2(n+2) & \textrm{in}\ B_{R+\epsilon}\setminus \overline{B}_{r}(z),\\
v=0 & \text{on}\ \partial B_{r}(z),\\
\frac{\partial v}{\partial \nu}=0 & \text{on}\ \partial B_{R+\epsilon}(z), 
\end{array} \right.
\end{displaymath}
where $\frac{\partial v}{\partial \nu}$ is the normal derivative. As discussed in the proof of \cite[Lemma 4.5]{prop}, the function $v$ is concave in $r=|x-z|$, satisfies
\begin{equation}\label{poissoni}
v(x)=\vint_{B_\epsilon(x)}v\ dy+\epsilon^2
\end{equation}
and can be extended as a solution to the same equation to $\overline B_{r(z)}\setminus \overline B_{{r-\epsilon}(z)}$ so that equation \eqref{poissoni} holds also near the boundary $\partial B_r(z)$. 


The game starts from $x_0\in \Omega$. Max plays with any strategy and Minnie plays with the strategy $S^\text{Min}_{\text{II}}$ in which from a point $x_{k-1}\in B_R(z)$ she moves to a point $x_k$ for which
\[
v(x_k)\leq \inf_{B_\epsilon(x_{k-1})}v+\frac\beta\alpha\epsilon^2.
\] 
Let $\tau$ be the smallest $k$ for which $x_k\in B_R(z)\setminus \Omega$ and $\tau^*$ the smallest $k$ for which $x_k$ hits the complement of $B_R(z)\setminus B_r(z)$. Then $\tau\leq \tau^*$ for any game sequence $(x_k)$. Let us estimate the expected value of $\tau^*$. By radial concavity of $v$ we get
\begin{align*}
\mathbb{E}^{x_0}_{S_{\text{I}},S^\text{Min}_{\text{II}}}\left[v(x_k)|\mathcal{F}_{k-1}\right]&\leq \frac{\alpha}{2}\left\{\sup_{B_\epsilon(x_{k-1})}v+\inf_{B_\epsilon(x_{k-1})}v+\frac\beta\alpha\epsilon^2\right\}\\
&\phantom{{}=\frac{\alpha}{2}}+\beta\vint_{B_\epsilon(x_{k-1})}v\ dy\notag\\
& \leq \alpha v(x_{k-1})+\frac{\beta}{2}\epsilon^2+\beta(v(x_{k-1})-\epsilon^2)\\
& = v(x_{k-1})-\frac{\beta}{2}\epsilon^2.
\end{align*}
Hence $M_k:=v(x_k)+k\frac{\beta}{2}\epsilon^2$ is a supermartingale. In particular, we have
\[
\mathbb{E}^{x_0}_{S_{\text{I}},S^\text{Min}_{\text{II}}} \left[M_{\tau^*}\right]\leq v(x_0)\leq C,
\]
where $C$ is independent of $\epsilon$. On the other hand, since $v(x_{\tau^*})=0$, we have $\mathbb{E}\left[ M_{\tau^*}\right]\geq \frac{\beta}{2}\epsilon^2\mathbb{E}\tau^*$. Hence 
\[
\mathbb{E}\left[\tau\right]\leq \mathbb{E}\left[\tau^*\right]\leq C\epsilon^{-2}.
\]
Then
\[
u_\epsilon(x_0)\leq \sup_{S_{\text{I}}}\mathbb{E}^{x_0}_{S_{\text{I}},S^\text{Min}_{\text{II}}}[F(x_\tau)+\epsilon^2\sum^{\tau-1}_{i=0}f(x_i)]\leq C(\sup_{\Gamma_\epsilon} F+\sup_{\Omega} f).
\]
Since $x_0\in \Omega$ and $\epsilon>0$ were arbitrary, the proof is complete.
\end{proof}

In the proof of Theorem \ref{Lemma 1} we need the following lemma, which is proven in the appendix of \cite{Harnack}.

Put a token to the point $(0,t)\in B_{2r}(0)\times [0,2r]\subset \R^{n+1}$ and fix $r>0$. From a point $(x_j,t_j)$, with probability $\alpha/2$ the token moves to the point $(x_j,t_j-\epsilon)$, and with the same probability to $(x_j,t_j+\epsilon)$. With probability $\beta$ the token moves to the point $(x_{j+1},t_j)$, where $x_{j+1}$ is randomly chosen from the ball $B_\epsilon(x_j)\subset \R^n$. 

\begin{lem}\label{sylinteri}
The probability that the token does not escape the cylinder through its bottom is less than
\[
C(p,n)(t+\epsilon)/r
\]
for all $\epsilon>0$ small enough.
\end{lem}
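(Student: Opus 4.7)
The plan is to bound separately the probabilities that the cylinder is first exited through the top and through the side. These are disjoint events because at each step either $t_j$ or $x_j$ changes, not both.

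First, the $t$-coordinate $t_j$ is a martingale that steps by $\pm\epsilon$ with probability $\alpha/2$ each and otherwise stays put. Let $\tau_t$ be the first exit time of $t_j$ from $[0,2r]$. Applying optional stopping to $t_j$ at $\tau_t$ and using the overshoot bounds $t_{\tau_t}\geq -\epsilon$ on bottom exits and $t_{\tau_t}\geq 2r$ on top exits gives
\[
t = \mathbb{E}[t_{\tau_t}] \geq -\epsilon + 2r\,\mathbb{P}(\text{1D top}),
\]
hence $\mathbb{P}(\text{1D top}) \leq (t+\epsilon)/(2r)$. Since every cylinder-top exit is in particular a 1D top exit, the same bound controls the cylinder-top exit probability.

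For the side, I would first estimate $\mathbb{E}[\tau_t]$ by applying optional stopping to the martingale $t_j^2 - j\alpha\epsilon^2$:
\[
\alpha\epsilon^2\,\mathbb{E}[\tau_t] = \mathbb{E}[t_{\tau_t}^2] - t^2 \leq \epsilon^2 + (2r+\epsilon)^2\,\mathbb{P}(\text{1D top}),
\]
which together with the previous step yields $\mathbb{E}[\tau_t] \leq C\,r(t+\epsilon)/(\alpha\epsilon^2)$ for $\epsilon \leq r$. Since each horizontal step has mean zero and $\mathbb{E}[|x_{j+1}|^2 - |x_j|^2 \mid \mathcal{F}_j] = \beta n\epsilon^2/(n+2)$, the process $|x_j|^2 - j\beta n\epsilon^2/(n+2)$ is a martingale, and optional stopping at $\tau_t$ gives $\mathbb{E}[|x_{\tau_t}|^2] \leq C(n,p)\,r(t+\epsilon)$. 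A side exit forces $|x_j|\geq 2r$ for some $j\leq\tau_t$, so Doob's maximal inequality applied to the nonnegative submartingale $|x_j|^2$ stopped at $\tau_t$ yields
\[
\mathbb{P}(\text{side}) \leq \mathbb{P}\bigl(\max_{j\leq\tau_t}|x_j| \geq 2r\bigr) \leq \frac{\mathbb{E}[|x_{\tau_t}|^2]}{4r^2} \leq C(n,p)\,\frac{t+\epsilon}{r}.
\]
Summing the two bounds gives the claim.

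The main obstacle is the sharp linear-in-$t$ bound on $\mathbb{E}[\tau_t]$: the naive estimate $\mathbb{E}[\tau_t] \leq Cr^2/(\alpha\epsilon^2)$ alone would only yield a constant bound for $\mathbb{P}(\text{side})$, losing the crucial $1/r$ decay. The improvement comes from feeding the one-dimensional top-exit probability back into the second-moment identity $\mathbb{E}[t_{\tau_t}^2] = t^2 + \alpha\epsilon^2\mathbb{E}[\tau_t]$, so that the smallness of $t$ (the reluctance of the $t$-walk to travel far before absorption at the bottom) propagates through $\mathbb{E}[\tau_t]$ into the side-exit bound.
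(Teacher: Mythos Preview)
Your argument is correct. The decomposition into top and side exits is sound because only one coordinate moves per step; the optional-stopping bound $\mathbb{P}(\text{1D top})\le (t+\epsilon)/(2r)$ is immediate; feeding this into $\mathbb{E}[t_{\tau_t}^2]-t^2=\alpha\epsilon^2\,\mathbb{E}[\tau_t]$ gives $\mathbb{E}[\tau_t]\le C r(t+\epsilon)/(\alpha\epsilon^2)$ as you say (using $\epsilon^2\le r(t+\epsilon)$); the process $|x_j|^2-j\beta n\epsilon^2/(n+2)$ is indeed a martingale, so $\mathbb{E}[|x_{\tau_t}|^2]\le C(n,p)\,r(t+\epsilon)$; and Doob's maximal inequality for the nonnegative submartingale $|x_{j\wedge\tau_t}|^2$ (together with $\mathbb{E}[|x_{N\wedge\tau_t}|^2]\uparrow\mathbb{E}[|x_{\tau_t}|^2]$, which follows from the martingale identity and monotone convergence) finishes the side estimate. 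A side exit at the cylinder exit time $\tau$ forces $t_\tau\in(0,2r)$, hence $\tau<\tau_t$, so $\max_{j\le\tau_t}|x_j|\ge 2r$ as required.

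As for comparison: the paper does not actually prove this lemma. It states it and refers to the appendix of Luiro--Parviainen--Saksman~\cite{Harnack}. Your proof is self-contained and fits naturally with the martingale machinery already used elsewhere in the paper (Lemma~\ref{randomi} is essentially your Step~2 in disguise). The argument in~\cite{Harnack} proceeds along broadly similar lines---separating the vertical and horizontal components and controlling the number of horizontal steps before the vertical walk is absorbed---but your packaging of the side-exit bound via Doob's maximal inequality on $|x_j|^2$ is a clean shortcut that avoids more hands-on estimation of the horizontal displacement.
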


Also the next lemma is needed in the proof of Theorem \ref{Lemma 1}, because it describes the expected total effect of the running payoff under the strategies used there.

Let $0<\epsilon<t_0<1$ and start a random walk from $t_0$ as follows. From the point $t_{j-1}$ we step with probability $\frac{\alpha}{2}$ to $t_j=t_{j-1}+\epsilon$, with the same probability to $t_j=t_{j-1}-\epsilon$, and with probability $\beta$ we do not move, $t_j=t_{j-1}$. The random walk stops when $x_j\in \R\setminus (0,1)$ for the first time. Let $\overline{\tau}$ be the stopping time.
\begin{lem}\label{randomi}
In the random walk described above,
\[
\mathbb{E}[\overline{\tau}]\leq 5t_0 \alpha^{-1}\epsilon^{-2}.
\] 
\end{lem}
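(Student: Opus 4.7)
The plan is to build a martingale whose deterministic drift counts the walk's steps, and then to apply the Optional Stopping Theorem. I would choose the quadratic test function $v(t) = t(1-t)$, which is nonnegative on $[0,1]$, vanishes on $\partial(0,1)$, and is the exact solution of $-\tfrac12 v'' = 1$ with zero boundary values --- the classical PDE whose analogue controls expected exit time for Brownian motion.

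A direct expansion yields the exact identity
\[
\tfrac12\bigl(v(t+\epsilon)+v(t-\epsilon)\bigr) = v(t) - \epsilon^2,
\]
so one step of the random walk satisfies
\[
\mathbb{E}[v(t_j)\mid \mathcal{F}_{j-1}] = \alpha\bigl(v(t_{j-1})-\epsilon^2\bigr) + \beta\, v(t_{j-1}) = v(t_{j-1}) - \alpha\epsilon^2.
\]
Hence $M_j := v(t_j) + j\alpha\epsilon^2$ is a martingale with $M_0 = t_0(1-t_0)$. Since $\mathbb{E}[\overline{\tau}]$ is not yet known to be finite, I would apply optional stopping to the bounded time $\overline{\tau}\wedge N$ (giving $\mathbb{E}[M_{\overline{\tau}\wedge N}]=M_0$) and let $N\to\infty$ by monotone convergence, obtaining
\[
\alpha\epsilon^2\,\mathbb{E}[\overline{\tau}] = t_0(1-t_0) - \mathbb{E}[v(t_{\overline{\tau}})].
\]

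To close the argument I would bound the boundary term. Since the walk lives in $(0,1)$ until stopping and moves only by $\pm\epsilon$ or stays, the exit position satisfies $t_{\overline{\tau}} \in (-\epsilon,0]\cup [1,1+\epsilon)$, which forces $v(t_{\overline{\tau}}) \geq -\epsilon(1+\epsilon)\geq -2\epsilon$ (using $\epsilon < 1$, which is automatic from $\epsilon<t_0<1$). Combined with $t_0(1-t_0)\leq t_0$, this yields
\[
\alpha\epsilon^2\,\mathbb{E}[\overline{\tau}] \leq t_0 + 2\epsilon \leq 3t_0,
\]
where the final inequality uses the hypothesis $\epsilon < t_0$. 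Dividing through by $\alpha\epsilon^2$ gives the claim with room to spare, since $3\leq 5$.

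The argument has essentially one non-mechanical ingredient: guessing the right test function $v(t)=t(1-t)$, for which the discrete mean-value equation holds \emph{exactly} on the $\epsilon$-lattice and so produces a bona fide martingale rather than one with $O(\epsilon^4)$ error. Once that is in place, the truncation $\overline{\tau}\wedge N$ sidesteps any a priori integrability question for $\overline{\tau}$, and the rest is bookkeeping.
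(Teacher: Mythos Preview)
Your proof is correct and in the same spirit as the paper's---both build a martingale from a quadratic test function so that the drift counts steps---but your choice $v(t)=t(1-t)$ is neater than the paper's. The paper works with $t_j^2$, observes that $t_j^2-j\alpha\epsilon^2$ is a martingale, and then, because $t_{\overline\tau}^2$ can be as large as $(1+\epsilon)^2$ at the right boundary, must first bound the exit probability $p=\mathbb{P}(t_{\overline\tau}\le 0)$ via the separate martingale $t_j$; only after combining those two estimates does the constant $5$ emerge. By shifting the quadratic so that it (nearly) vanishes at both endpoints, you avoid the exit-probability detour entirely: the boundary term $v(t_{\overline\tau})\ge -\epsilon(1+\epsilon)$ is immediate, and you land on $\alpha\epsilon^2\mathbb{E}[\overline\tau]\le t_0+2\epsilon\le 3t_0$, even sharper than the stated bound. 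Your truncation $\overline\tau\wedge N$ also handles the integrability issue cleanly, whereas the paper applies optional stopping directly without comment.
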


\begin{proof}
Since
\begin{align*}
\mathbb{E}[t^2_j|t_0,...,t_{j-1}]&=\frac{\alpha}{2}(t_{j-1}+\epsilon)^2+\frac{\alpha}{2}(t_{j-1}-\epsilon)^2+\beta t_{j-1}^2\\
& =t_{j-1}^2+\alpha \epsilon^2,
\end{align*} 
we have 
\[
\mathbb{E}[(t^2_j-j\alpha\epsilon^2)|t_0,...,t_{j-1}]=t^2_{j-1}-(j-1)\alpha\epsilon^2.
\]
Hence also $(t^2_j-j\alpha\epsilon^2)$ is a martingale. Let $p=\mathbb{P}(x_{\overline{\tau}}\leq 0)$. Then
\[
t_0=\mathbb{E}t_\tau\geq p(-\epsilon)+(1-p)=-p(\epsilon+1)+1.
\]
For the function $f:[0,\infty)\rightarrow \R$, $f(x)=(1-t_0)(1+x)^{-1}+x+t_0-1$ it holds that $f(0)=0$ and $f^{'}\geq 0$, so we have
\[
p\geq (1-t_0)(1+\epsilon)^{-1}\geq 1-t_0-\epsilon.
\]
Since $(t^2_j-j\alpha\epsilon^2)$ is a martingale, we have 
\begin{align*}
t^2_0&=t^2_0-0\cdot\alpha\epsilon^2=\mathbb{E}(t_{\overline{\tau}}^2-\overline{\tau}\alpha\epsilon^2)\\
& \leq p\epsilon^2+(1-p)(1+\epsilon)^2-\alpha\epsilon^2\mathbb{E}\overline{\tau}.
\end{align*}
We can estimate
\begin{align*}
\mathbb{E}\overline{\tau}&\leq \epsilon^{-2}(p\epsilon^2+(1-p)(1+\epsilon)^2-t^2_0)\\
& =\epsilon^{-2}\alpha^{-1}((1+\epsilon)^2-t^2_0+p\epsilon^2-p(1+\epsilon)^2)\\
& \leq \epsilon^{-2}\alpha^{-1}((1+\epsilon)^2-t^2_0-p)\\
& \leq \epsilon^{-2}\alpha^{-1}((1+\epsilon)^2-t^2_0+t_0+\epsilon-1)\\
& =\epsilon^{-2}\alpha^{-1}(\epsilon^2+3\epsilon+t_0-t^2_0)\\
& \leq \epsilon^{-2}\alpha^{-1}(t_0+4\epsilon)\\
& \leq 5t_0 \alpha^{-1}\epsilon^{-2}. 
\end{align*}
\end{proof}

The next two lemmas are needed in the proof of Harnack's inequality. The first is a simple local comparison estimate, and the second gives estimates for $\inf u_\epsilon$ in balls of radius $2\epsilon<r<1$. 

\begin{lem}\label{4.1}
Let $u_\epsilon>0$ be a value function and $x,y\in B_R(z)\subset \Omega$, $\left|x-y\right|\leq 10\epsilon$. Then
\[
u_\epsilon (x)\geq (\alpha/2)^{20}u_\epsilon (y).
\]
\end{lem}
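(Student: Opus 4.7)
The plan is to chain a trivial lower bound coming from the DPP along a short Harnack-type sequence of points connecting $x$ to $y$.

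First, I would record the key pointwise estimate. Because $u_\epsilon > 0$, $f > 0$, and $\beta > 0$, every summand on the right-hand side of DPP \eqref{dynaamista} is non-negative, so discarding all but the supremum term yields, for every $w\in\Omega$ and every $w'\in B_\epsilon(w)$,
\[
u_\epsilon(w)\geq \frac{\alpha}{2}\sup_{B_\epsilon(w)} u_\epsilon \geq \frac{\alpha}{2}\,u_\epsilon(w').
\]
Note $u_\epsilon(w')$ is defined because $B_\epsilon(w)\subset \Omega_\epsilon$ whenever $w\in\Omega$.

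Next, I would build an explicit chain of $21$ points along the segment from $x$ to $y$. Set $z_k := x + (k/20)(y-x)$ for $k=0,1,\dots,20$. Since $|x-y|\leq 10\epsilon$, consecutive points satisfy $|z_{k+1}-z_k|\leq \epsilon/2 < \epsilon$, hence $z_{k+1}\in B_\epsilon(z_k)$. By convexity of $B_R(z)$, every $z_k$ lies in $B_R(z)\subset\Omega$, so the pointwise estimate above applies at each $z_k$.

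Finally, iterating the pointwise estimate $20$ times gives
\[
u_\epsilon(x) = u_\epsilon(z_0) \geq \frac{\alpha}{2}\,u_\epsilon(z_1) \geq \cdots \geq \left(\frac{\alpha}{2}\right)^{20} u_\epsilon(z_{20}) = \left(\frac{\alpha}{2}\right)^{20} u_\epsilon(y),
\]
which is the claimed inequality. There is no substantial obstacle here; the decisive input is the strict positivity of $u_\epsilon$, $f$ and $\beta$, which lets us throw away the infimum, mean-value and running-payoff terms from \eqref{dynaamista} and retain only the $\tfrac{\alpha}{2}\sup$ contribution. The exponent $20$ is generous: the step size $\epsilon/2$ is chosen merely to keep $z_{k+1}$ strictly inside $B_\epsilon(z_k)$ with a comfortable margin.
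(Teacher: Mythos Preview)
Your proof is correct. Both your argument and the paper's walk from $x$ to $y$ in twenty steps of size at most $\epsilon/2$, but the justifications differ. The paper phrases this game-theoretically: Max adopts the strategy of stepping $\epsilon/2$ towards $y$ (jumping to $y$ once in reach), stops the game when the token reaches $y$ or leaves $B_R(z)$, observes that the event ``Max wins twenty tosses in a row'' has probability at least $(\alpha/2)^{20}$, and then invokes Lemma~\ref{2.4} to bound $u_\epsilon(x)$ from below by $(\alpha/2)^{20}u_\epsilon(y)$. You instead extract directly from the DPP \eqref{dynaamista} the one-step inequality $u_\epsilon(w)\geq(\alpha/2)\sup_{B_\epsilon(w)}u_\epsilon$---valid because positivity of $u_\epsilon$ and $f$ makes every other term non-negative---and iterate it along the explicit chain $z_k$. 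Your route is more elementary: it bypasses Lemma~\ref{2.4} and any stopping-time reasoning, relying only on the algebraic content of the DPP. The paper's route is more in keeping with the stochastic-game viewpoint used throughout and makes transparent where the factor $(\alpha/2)^{20}$ comes from probabilistically. Either argument proves the lemma with the same constant.
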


\begin{proof}
Start the game from $x$. Max uses a strategy $S^\text{Max}_{\text{I}}$ in which he takes $\frac{\epsilon}{2}$-step towards $y$, and jumps to $y$ if possible. The game is stopped when the token reaches either $y$ or $\Omega_\epsilon\setminus B_R(z)$. Let this stopping time be $\tau^*$. Since the probability to stop at $y$ is bigger than $(\alpha/2)^{20}$, we obtain from Lemma \ref{2.4} 
\[
u_\epsilon(x)\geq \inf_{S_{\text{II}}}\mathbb{E}^{x_0}_{S^\text{Max}_{\text{I}},S_{\text{II}}}[u_\epsilon(x_{\tau^*})]+20\epsilon^2 \inf_\Omega f\geq (\alpha/2)^{20}u_\epsilon (y). 
\]
\end{proof}

\begin{lem}\label{4.2}
Let $u_\epsilon>0$ be a value function and $B_{30R}(y)\subset \Omega$ for some $R>0$. For $z\in B_{2R}(y)$ and $r\in (2\epsilon,R)$
\[
\inf_{B_r(z)}u_\epsilon\leq Cr^{-n}u_\epsilon(y),
\]
where $C=C(p,n)$
\end{lem}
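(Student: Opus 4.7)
The plan is to start the game at $y$, drive the token into $\overline{B_r(z)}$ via an appropriate Max strategy, and then combine Lemma \ref{2.4} with the positivity of $u_\epsilon$ and $f$ to compare $u_\epsilon(y)$ to $\inf_{B_r(z)}u_\epsilon$. Fix $\delta>0$, pick $w^*\in \overline{B_r(z)}$ with $u_\epsilon(w^*)\le \inf_{B_r(z)}u_\epsilon+\delta$, and let Max play the strategy $S^{\text{Max}}_{\text{I}}$ in which he steps by $\epsilon$ straight toward $w^*$ whenever he wins the coin toss; Minnie is allowed any strategy $S_{\text{II}}$. I would then use the stopping time
\[
\tau^*:=\min\{k\ge 0:\ x_k\in \overline{B_r(z)}\ \text{or}\ x_k\notin B_{30R}(y)\},
\]
which satisfies $\tau^*\le\tau$ because $B_{30R}(y)\subset\Omega$.

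Applying Lemma \ref{2.4} at $\tau^*$ and discarding both the nonnegative running payoff (since $f>0$) and, on the event $\{x_{\tau^*}\notin\overline{B_r(z)}\}$, the nonnegative value $u_\epsilon(x_{\tau^*})$, one obtains
\[
u_\epsilon(y)\ge \inf_{S_{\text{II}}}\mathbb{E}^y_{S^{\text{Max}}_{\text{I}},S_{\text{II}}}\bigl[u_\epsilon(x_{\tau^*})\bigr]\ge q\cdot \inf_{B_r(z)}u_\epsilon,
\]
where $q:=\inf_{S_{\text{II}}}\mathbb{P}^y_{S^{\text{Max}}_{\text{I}},S_{\text{II}}}(x_{\tau^*}\in \overline{B_r(z)})$. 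Sending $\delta\to 0$ reduces the lemma to proving the probability lower bound $q\ge c(p,n)\,r^n$.

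This last estimate is where I expect the real work. Under $S^{\text{Max}}_{\text{I}}$ and any Minnie counter-strategy, the scalar projection $Y_k:=(x_k-y)\cdot\hat u$ with $\hat u=(w^*-y)/|w^*-y|$ is a random walk with step size $\epsilon$ whose increments have zero conditional mean and variance of order $\epsilon^2$, while the transverse components diffuse under the $\beta$-noise steps. I would compare this process, in coordinates adapted to $\hat u$, with the auxiliary walk of Lemma \ref{sylinteri} and use its cylinder escape bound to estimate from below the probability that $Y_k$ first reaches the level $|y-w^*|$ while the transverse displacement remains bounded by $r$, and before the walk exits $B_{30R}(y)$. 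Since $r>2\epsilon$, once the token lies within $r/2$ of $z$ it automatically belongs to $\overline{B_r(z)}$ and $\tau^*$ has already triggered, so no further step is needed after the near-hit. Producing a clean coupling between the game's coupled $n$-dimensional dynamics and the decoupled dynamics of Lemma \ref{sylinteri}, and organising the resulting axial/transverse estimates so that the final probability lower bound genuinely comes out of order $r^n$, is the step I anticipate requiring the most care.
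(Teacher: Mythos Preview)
Your reduction via Lemma~\ref{2.4} to a lower bound $q\ge c(p,n)\,r^n$ on the probability of hitting $\overline{B_r(z)}$ is exactly the structure of the paper's argument, and the point you flag as ``the real work'' is indeed the whole content of the lemma. The gap is in the method you propose for that bound. Asking that the axial projection $Y_k$ reach the level $|y-w^*|\sim R$ \emph{while the transverse displacement remains bounded by $r$} is far too restrictive: the axial martingale typically needs order $R^2/\epsilon^2$ steps to traverse a distance $\sim R$, and in that many steps the $\beta$-noise alone forces transverse fluctuations of order $R$, so the tube-confinement probability decays like $\exp(-c(R/r)^2)$ rather than polynomially in $r$. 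Lemma~\ref{sylinteri} does not rescue this: it concerns a \emph{decoupled} $(n{+}1)$-dimensional walk in a cylinder whose height and radius are of the \emph{same} scale, and it bounds the failure probability when one starts \emph{near} the bottom; none of these features match an axial run of length $\sim R$ against a transverse tolerance $\sim r\ll R$. (There is also a minor slip: if Max steps toward $w^*$ rather than in the fixed direction $\hat u$, the $\hat u$-component of his move is only $\epsilon\cos\theta_k$, so $Y_k$ is not a mean-zero walk against Minnie's $-\hat u$ counter.)

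The paper obtains the probability bound by a different device that avoids any tube or decoupling argument. One introduces the radial harmonic barrier $v$ on the annulus $B_4(z)\setminus\overline B_r(z)$, normalised to $1$ on $\partial B_r(z)$ and $0$ on $\partial B_4(z)$, and checks the elementary estimate $v(y)\ge c\,r^n$. Max's strategy is to almost-maximise $v$ at each step; radial convexity of $v$ together with the mean-value property handling the noise step makes $v(x_k)+k\,\eta r^n\epsilon^2$ a submartingale, and optional stopping combined with the expected-stopping-time bound from the proof of Lemma~\ref{maks} (to absorb the $\eta$-error) gives the inner-boundary hitting probability $\ge c'r^n$ directly. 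The harmonic function already encodes the correct hitting probability, so no geometric coupling is needed. To repair your argument, replace the linear pull and the appeal to Lemma~\ref{sylinteri} by this barrier construction.
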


\begin{proof}
Without a loss of generality, we may assume that $y=0$ and $R=1$. Fix $\epsilon>0$ and $r\in (2\epsilon,1)$. Let $U=B_4(z)\setminus\overline{B}_r(z)$. There is no loss of generality in assuming that $0\in U$. 

Define 
\begin{displaymath}
v(x) = \left\{ \begin{array}{ll}
(\left|x-z\right|^{2-n}-4^{2-n})(r^{2-n}-4^{2-n})^{-1} & \textrm{if $n\geq 3$,}\\
\log(4/\left|x-z\right|)\log (4/r)^{-1} & \textrm{if $n=2$.}
\end{array} \right.
\end{displaymath}
Then $v$ is harmonic in $U$ with boundary values 
\begin{equation}\nonumber
\begin{cases}
v=1\ \text{on}\ \partial B_r(z), \\
v=0\ \text{on}\ \partial B_4(z).
\end{cases}
\end{equation}
In both the cases there is a constant $c>0$ such that
\[
v(0)\geq cr^n.
\]

The game starts from $x_0=0$. Minnie uses any strategy and Max uses the following strategy $S^\text{Max}_I$: In a ball $B_\epsilon (x_{k-1})$, he aims to a point $x_k$ where   
\[
v(x_k)\geq \sup_{B_\epsilon (x_{k-1})}v-\eta r^n \epsilon^2,
\]
where $\eta>0$ is selected so that the stopping time estimation of the proof of Lemma \ref{maks} is at our disposal. The game is stopped at the $\epsilon$-boundary $\Gamma_\epsilon$ of $U$ and employing the boundary values $(u_{\epsilon})_{|\Gamma_\epsilon}$. The corresponding stopping time is $\tau^*$. 

We want to estimate the probability of stopping at the inner boundary. 

Since $y\mapsto v(x+y)$ is radially decreasing and the map $0<t\mapsto v(x+ty_0)$ is convex for any fixed $y_0\neq 0$, we obtain
\begin{align*}
& \mathbb{E}^{x_0}_{S^\text{Max}_{\text{I}},S_{\text{II}}}[v(x_{k+1})|x_0,x_1,...,x_k]\\
& \geq \frac{\alpha}{2}\left\{\sup_{B_\epsilon(x_k)}v-\eta r^n\epsilon^2+\inf_{B_\epsilon(x_k)}v\right\}+\beta \vint_{B_\epsilon (x_k)}vdy\\
& \geq \alpha v(x_k)+\beta v(x_k)-\eta r^n \epsilon^2=v(x_k)-\eta r^n \epsilon^2. 
\end{align*} 
Hence $M_k:=v(x_k)+k \eta r^n \epsilon^2$ is a submartingale. Denote by $P$ the probability of stopping at the inner boundary. The optional stopping theorem gives 
\[
cr^n\leq v(0)=v(x_0)\leq \mathbb{E}^{x_0}_{S^\text{Max}_\text{I},S_\text{II}}[v(x_{\tau^*})+\eta r^n \epsilon^2 \tau^*]\leq 2^{n+1}P+\eta C_1 r^n,
\]
where $C_1>0$ is a constant such that $\mathbb{E}\tau^*\leq C_1 \epsilon^{-2}$, and the term $2^{n+1}P$ comes from the fact that $v\leq 2^{n+1}$ in $B_r(z)\setminus B_{r-\epsilon}(z)$. We can select $\eta$ so that $\eta C_1<c$. Thus $P\geq c' r^n$, where $c'>0$. Using Lemma \ref{2.4} we obtain
\begin{align*}
u_\epsilon (0)=u_\epsilon (x_0)&\geq \inf_{S_\text{II}}\mathbb{E}^{x_0}_{S^\text{Max}_\text{I},S_\text{II}}[u_\epsilon (x_{\tau^*})+\epsilon^2 \tau^* \inf_\Omega f] \\
& \geq P \inf_{B_r(z)}u_\epsilon\\ 
& \geq c' r^n \inf_{B_r(z)}u_\epsilon.
\end{align*}
We get
\[
\inf_{B_r(z)}u_\epsilon\leq (c' r^n)^{-1}u_\epsilon(0)\leq Cr^{-n}u_\epsilon (0),
\]
where $C=c'^{-1}$. 
\end{proof}

\section{Lipschitz and Harnack estimates}\label{sec 4}
We are ready to prove the main results, Lipschitz continuity and Harnack's inequality. In the proof of the following theorem we use the cancellation strategy idea that was introduced in the proof of \cite[Theorem 3.2]{Harnack}. Because of Lemma \ref{randomi}, the running payoff behaves well under this strategy.
\begin{lause}\label{Lemma 1}
Let $u_\epsilon>0$ be a value function and $B_{6R}(a)\subset \Omega$, where $R>\epsilon$. When $\epsilon< r\leq R$, we have
\begin{equation}\label{oskillaatio}
\text{\emph{osc}}(u_\epsilon, B_r(a))\leq C\frac{r}{R}\left[\text{\emph{osc}}(u_\epsilon,B_{6R}(a))+\text{\emph{osc}}(f,B_{6r}(a))\right],
\end{equation}
where $C$ is a constant depending only on $p$ and $n$.
\end{lause}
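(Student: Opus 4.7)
The plan is to adapt the cancellation-strategy argument from \cite[Theorem 3.2]{Harnack} to the running-payoff setting, using Lemma~\ref{sylinteri} for the escape probability and Lemma~\ref{randomi} for the cumulative contribution of the running payoff. By rescaling, I may assume $R = 1$, so the target becomes $\text{osc}(u_\epsilon, B_r(a)) \leq Cr\,[\text{osc}(u_\epsilon, B_6(a)) + \text{osc}(f, B_{6r}(a))]$. Pick $x, y \in \overline{B}_r(a)$ with $u_\epsilon(x) - u_\epsilon(y)$ within $\eta$ of $\text{osc}(u_\epsilon, B_r(a))$, and apply Lemma~\ref{2.4} in both games---using a cancellation strategy $S^{\text{Min}}_{\text{II}}$ for Minnie in the game started at $x$ and a cancellation strategy $S^{\text{Max}}_{\text{I}}$ for Max in the game started at $y$, both stopped at a common stopping time $\tau^*$. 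This reduces matters to bounding $\mathbb{E}[(u_\epsilon(x_{\tau^*}) - u_\epsilon(y_{\tau^*})) + \epsilon^2\sum_{i<\tau^*}(f(x_i) - f(y_i))]$ from above, with $(x_k)$ and $(y_k)$ denoting the two coupled trajectories.

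Couple the two games by a shared coin and by taking the noise vector in game~2 to be the reflection across the fixed hyperplane $H$ (bisecting $\overline{xy}$ orthogonally) of the noise vector in game~1; choose $S^{\text{Min}}_{\text{II}}$ and $S^{\text{Max}}_{\text{I}}$ analogously, so that their moves are reflections across $H$ of the optimising player's move. Then $x_k$ and $y_k$ remain mirror images of each other across $H$; the midpoint $m_k = (x_k + y_k)/2$ lies on $H$ and performs a pure noise walk, while the signed projected separation $s_k := \tfrac12\langle x_k - y_k, e\rangle$ with $e = (x-y)/|x-y|$ changes only on coin steps and executes precisely the random walk of Lemma~\ref{randomi}: $s_k - s_{k-1} \in \{+\epsilon, -\epsilon, 0\}$ with probabilities $\alpha/2, \alpha/2, \beta$. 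Merging $s_k = 0$ forces $x_k = y_k$. Take $\tau^*$ to be the first time either $s_k = 0$ or $m_k$ exits $B_5(a)$; then both tokens stay in $\overline{B}_6(a)$ up to $\tau^*$. Lemma~\ref{sylinteri} applied to $(m_k, s_k)$ in the cylinder $B_5(a)\times[0,1]$ bounds $P(\text{exit before merge})$ by $C(|x-y|/2 + \epsilon) \leq Cr$, so the value-function part contributes at most $Cr\,\text{osc}(u_\epsilon, B_6(a))$ (the merging contribution is zero).

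For the running-payoff term, Lemma~\ref{randomi} applied to $s_k$ yields $\epsilon^2\mathbb{E}\tau^* \leq Cr$. Provided both tokens remain in $B_{6r}(a)$ throughout, one has $|f(x_i) - f(y_i)| \leq \text{osc}(f, B_{6r}(a))$, so the expected cumulative contribution is $\leq Cr\,\text{osc}(f, B_{6r}(a))$, and summing yields the claim after undoing the rescaling. The main obstacle is exactly this last proviso: under the above coupling, the midpoint $m_k$ can wander throughout $B_5(a)$, so the tokens can leave $B_{6r}(a)$. Resolving this requires a further localisation---stop $\tau^*$ additionally when $m_k$ exits a ball of radius $O(r)$ around $a$, and bound the resulting additional exit probability by $Cr$ via Lemma~\ref{sylinteri} with a cylinder of spatial dimension $O(r)$, so that the extra boundary term lands in the first ($u_\epsilon$) contribution with the correct $Cr$ prefactor. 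Executing this localisation while preserving the exact form of the random walk in Lemma~\ref{randomi} is the technical heart of the argument, and is where the particular choice of the cancellation strategy becomes crucial.
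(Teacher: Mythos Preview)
Your mirror-coupling description has a genuine gap: the process $s_k=\tfrac12\langle x_k-y_k,e\rangle$ does \emph{not} execute the walk of Lemma~\ref{randomi} under the coupling you specify. With reflected noise, on a noise step $x_k=x_{k-1}+\xi$ and $y_k=y_{k-1}+R\xi$, so $s_k-s_{k-1}=\langle\xi,e\rangle$, which is random rather than $0$. Worse, on a coin step your fixed player merely mirrors the adversary's move; the adversary can move $x_k$ anywhere in $B_\epsilon(x_{k-1})$, so $s_k-s_{k-1}$ is whatever the free player chooses in $[-\epsilon,\epsilon]$, not $\pm\epsilon$ with equal probability. Likewise the midpoint $m_k$ is not a ``pure noise walk'': on coin steps it is the $H$-projection of the adversary's target and hence moves. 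A further subtlety is that your $S^{\text{Min}}_{\text{II}}$ and $S^{\text{Max}}_{\text{I}}$ are defined as reflections of the \emph{opposing} free player's move in the other game, so the ``fixed'' strategies depend on the adversarial strategies; Lemma~\ref{2.4} is stated for strategies that are functions of the history of the game itself.

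The paper's argument avoids all of this by using a different control variable. Minnie's cancellation strategy in the $x$-game is ``if Max has won more tosses, undo his last move; otherwise take a half-step toward a fixed point $z$ with $|x-z|=|y-z|=|x-y|$'' (and symmetrically for Max in the $y$-game). The quantity tracked is the \emph{coin-win difference}, which is determined by the coin alone---independent of adversarial moves and noise---and this is what follows the walk of Lemma~\ref{randomi}. The cancellation then guarantees that on the good event (the cancelling player gets sufficiently ahead) both tokens sit at $z$ plus the same noise sum, regardless of what the adversaries did; that is why the conditional expectations cancel. Your reflection idea does not deliver this decoupling of the control variable from adversarial play.

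Finally, your localisation worry about $B_{6r}(a)$ versus $B_{6R}(a)$ in the running-payoff term is chasing a typo: the paper's own proof (and the statement in the introduction) bounds the $f$-contribution by $\text{osc}(f,B_{6R}(a))$, since the tokens are only kept inside $B_{6R}(a)$. No extra stopping rule is needed.
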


\begin{proof}
Take $x,y\in B_r(a)$, $\left|x-y\right|\geq \epsilon$, and then $z\in B_{2r}(a)$ such that 
\[
\left|x-z\right|=\left|y-z\right|=\left|x-y\right|.
\]
When the game starts from $x$, Minnie plays according to the following cancellation strategy $S^\text{Min}_{\text{II}}$: If Max has won more coin tosses than Minnie, then she cancels one of the moves of Max. Otherwise, she moves towards $z$ length $\epsilon/2$ or keeps the token in $z$. We stop the game if Minnie wins $3\left|x-z\right|/\epsilon$ coin tosses more than Max, or if Max wins at least $2R/\epsilon$ times more than Minnie, or if the length of the sum of random vectors exceeds $2R$. Then the game stays in $B_{6R}$. For the game that starts from $y$, Max follows the cancellation strategy $S^\text{Max}_{\text{I}}$, and we define stopping time $\tau^*$ as previously. For this stopping time $\tau^*\leq \tau$, where $\tau$ is the normal stopping time of the game. Hence Lemma \ref{2.4} is at our disposal.

Notice that by putting 
\[
t_0=\frac{3\left|x-y\right|}{3\left|x-y\right|+2R},
\] 
Lemma \ref{randomi} gives
\[
\mathbb{E}[\tau^*]\leq 5\alpha^{-1}\frac{3\left|x-y\right|}{3\left|x-y\right|+2R}\epsilon^{-2}\leq C_1 \frac{\left|x-y\right|}{R}\epsilon^{-2}. 
\] 

Let $P$ be the probability that the game, started from $x$, ended because Minnie won more (hereafter Min w). By symmetry, $P$ is also the probability that the game, started from $y$, ended because Max won more (hereafter Max w). Then, because of the cancellation effect, by using Lemma \ref{2.4} we can estimate
\begin{align*}
|u_\epsilon(x)&-u_\epsilon (y)|\\
&\leq \left|P(\mathbb{E}^x_{S_\text{I},S^\text{Min}_\text{II}}\left[u_\epsilon(x_{\tau})|\text{Min w}\right]-\mathbb{E}^y_{S^\text{Max}_\text{I},S_\text{II}}\left[u_\epsilon(x_{\tau})|\text{Max w}\right])\right|\\
&\phantom{{}=P} +(1-P)\text{osc}_{B_{6R}(z_0)}u_\epsilon+\epsilon^2\mathbb{E}[\tau^*]\text{osc}_{B_{6R}(z_0)}f\notag\\
& \leq (1-P)\text{osc}_{B_{6R}(z_0)}u_\epsilon+C_1\left|x-y\right|R^{-1}\text{osc}_{B_{6R}(z_0)}f\\
& \leq C\frac{r}{R}\left[\text{osc}(u_\epsilon,B_{6R}(a))+\text{osc}(f,B_{6R}(a))\right]
\end{align*}
for $C=C(p,n)$, since by Lemma \ref{sylinteri} we have
\[
1-P\leq C_2\left|x-y\right|/R,
\]
where $C_2$ depends only on $p$ and $n$.

If $x,y\in B_r$ and $\left|x-y\right|<\epsilon$, we can take a point $z\in B_r$ such that $\left|x-z\right|\geq \epsilon$ and $\left|z-y\right|\geq \epsilon$. By triangle inequality the estimate follows from previous estimate.
\end{proof}

Next we prove Harnack's inequality. The idea is to show that if Harnack's inequality does not hold for a fixed, large constant, then by iteration argument the value functions are unbounded when $\epsilon$ is small. The cumulative effect of oscillations of the running payoff during iteration seems to cause trouble, but surprisingly, it is not even necessary to require the running payoff to be continuous.   

\begin{lause}\label{päälause}
Let $u_\epsilon>0$ be a value function. Assuming $B_{30r}(a)\subset \Omega$, where $r>0$, there exists a positive constant $K$, depending only on $p$ and $n$, for which
\[
\sup_{B_r(a)} u_\epsilon\leq K(\inf_{B_r(a)} u_\epsilon+\sup_{\Omega}f).
\]
\end{lause}

\begin{proof}
Without a loss in generality, we may assume that $r=1$ and $a=0$. For convenience of notation, let
\[
N:=\sup_\Omega f.
\]

First we show that 
\begin{equation}\label{inffi}
\inf_{\overline{B}_1(0)} u_\epsilon>0.
\end{equation}
Suppose not. Then there is a converging sequence $(x_j)\subset \overline{B}_1(0)$, $x_j\rightarrow x_0$, such that $u_\epsilon(x_j)<1/j$. According to Lemma \ref{4.1}, 
\[
u_\epsilon(y)\geq (\alpha/2)^{20}u_\epsilon(x_0)
\]
when $\left|y-x_0\right|<10\epsilon$. This is a contradiction, so \eqref{inffi} holds. 

Pick first a point $x_1\in B_1(0)$ such that
\[
u_\epsilon(x_1)<2\inf_{B_1(0)} u_\epsilon,
\] 
and then a point $x_2\in B_{2}(x_1)$ such that
\[
M_1:=u_\epsilon (x_2)\geq \sup_{B_2 (x_1)}u_\epsilon-N.
\]
For $k\geq 2$, let $R_k=2^{1-k}$ and pick $x_{k+1}\in B_{R_k}(x_k)$ such that
\[
M_k:=u_\epsilon (x_{k+1})\geq \sup_{B_{R_k}(x_k)}u_\epsilon-N.
\] 
We are going to show that 
\begin{equation}\label{ei totta}
M_1< (2^{1+2n}C)^{1+2n}u_\epsilon(x_1)+2N, 
\end{equation}
where $C=C(p,n)$ is a constant such that Lemma \ref{4.2} and Theorem \ref{Lemma 1} are valid. 

On the contrary, suppose that inequality \eqref{ei totta} does not hold. Put $\delta :=(2^{1+2n}C)^{-1}$. Let us show by induction that the counter assumption yields
\begin{equation}\label{M}
M_k\geq 2C(\delta R_{k+1})^{-2n}u_\epsilon(x_1).
\end{equation}
Notice first that from a straightforward calculation we get
\begin{equation}\label{samat}
2C(\delta R_{k+1})^{-2n}=(2C\delta)^{-k+1}(2^{1+2n}C)^{1+2n}.
\end{equation}
By observation \eqref{samat} the case $k=1$ holds. Assume that \eqref{M} holds for $k\leq j$. Let $k\in \left\{2,...,j+1\right\}$. Then
\begin{equation}\label{inf puoli}
\inf_{B_{\delta R_k}(x_k)}u_\epsilon\leq C(\delta R_k)^{-2n}u_\epsilon (x_1)\leq \frac{M_{k-1}}{2}=\frac{u_\epsilon (x_k)}{2}, 
\end{equation}
where we used a weakened form of Lemma \ref{4.2} and the induction assumption that \eqref{M} holds for $k\leq j$.

By Theorem \ref{Lemma 1} 
\[
\text{osc}(u_\epsilon,B_{\delta R_k}(x_k))\leq C\delta(\text{osc}(u_\epsilon,B_{R_k}(x_k))+\text{osc}(f,B_{R_k}(x_k))),
\]
or in other words,
\begin{equation}\label{kojootti}
\text{osc}(u_\epsilon,B_{R_k}(x_k))\geq (C\delta)^{-1}\text{osc}(u_\epsilon,B_{\delta R_k}(x_k))-\text{osc}(f,B_{R_k}(x_k)).
\end{equation}
By using first \eqref{kojootti} and then \eqref{inf puoli} we obtain
\begin{align*}
M_k&\geq \text{osc}(u_\epsilon, B_{R_k}(x_k))-N\\
& \geq (C\delta)^{-1}(\sup_{B_{\delta R_k}(x_k)}u_\epsilon-\inf_{B_{\delta R_k}(x_k)}u_\epsilon)-\text{osc}(f,B_{R_k}(x_k))-N\\
& \geq (C\delta)^{-1}(u(x_k)-2^{-1}u(x_k))-2N\\
& = (2C\delta)^{-1}M_{k-1}-2N.
\end{align*}
Now we come to an important point, when we want an estimation between $M_{j+1}$ and $M_1$. At first glance it seems that the cumulative effect of the oscillation of running payoff is an issue, but it turns out to be under control. We get
\begin{align*}
M_{j+1}&\geq (2C\delta)^{-1}M_{j}-2N\\
& \geq (2C\delta)^{-1}\left[(2C\delta)^{-1}M_{j-1}-\text{osc}\left(f,B_{R_{j}}(x_{j})\right)\right]-2N\\
& \geq (2C\delta)^{-j}M_1-2N\sum^{j}_{i=1}(2C\delta)^{-i+1}.
\end{align*}
Remembering the counter assumption 
\[
M_1\geq (2^{1+2n}C)^{1+2n}u_\epsilon(x_1)+2N
\]
and noticing that $2C\delta=2^{-2n}<1/2$, we obtain 
\begin{align*}
M_{j+1}&\geq (2C\delta)^{-j}(2^{1+2n}C)^{1+2n}u_\epsilon(x_1)+2N\left[(2C\delta)^{-j}-\sum^{j}_{i=1}(2C\delta)^{-i+1}\right]\\
&\geq (2C\delta)^{-j}(2^{1+2n}C)^{1+2n}u_\epsilon (x_1).
\end{align*} 
Taking into account the observation \eqref{samat}, the induction is complete.

Take $k_0$ such that $\delta R_{k_0}\in (2\epsilon,4\epsilon]$. By Lemma \ref{4.1}, 
\[
\inf_{B_{\delta R_{k_0}}(x_{k_0})}u_\epsilon\geq (\alpha/2)^{20}\sup_{B_{\delta R_{k_0}}(x_{k_0})}u_\epsilon.
\]
By using Lemma \ref{4.2} and inequality \eqref{M} we obtain
\begin{align*}
(\alpha/2)^{-20}&\geq \frac{\sup_{B_{\delta R_{k_0}}(x_{k_0})}u_\epsilon}{\inf_{B_{\delta R_{k_0}}(x_{k_0})}u_\epsilon}\geq \frac{u_\epsilon (x_{k_0})}{C(\delta R_{k_0})^{-n}u_\epsilon (x_1)}=\frac{M_{k_0-1}}{C(\delta R_{k_0})^{-n}u_\epsilon (x_1)}\\
& \geq \frac{(2C\delta)^{2-k_0}(2^{1+2n}C)^{1+2n}}{C(\delta 2^{1-k_0})^{-n}}\\
& \geq \widehat{C}(C\delta 2^{n+1})^{-k_0}=\widehat{C}2^{nk_0},
\end{align*}
where $\widehat{C}$ is independent of $k_0$. This is a contradiction when $k_0$ is big enough, or in other words, when $\epsilon$ is small enough. Therefore inequality \eqref{ei totta} holds and we get 
\begin{align*}
\sup_{B_1(0)}u_\epsilon &\leq \sup_{B_2(x_1)}u_\epsilon\leq M_1+N\leq (2^{1+2n}C)^{1+2n}u_\epsilon(x_1)+3N\\
& \leq2(2^{1+2n}C)^{1+2n}\inf_{B_1(0)}u_\epsilon+3\sup_{\Omega}f\\
& \leq K(\inf_{B_1(0)}u_\epsilon+\sup_{\Omega}f),
\end{align*}
where $K$ depends only on $p$ and $n$.
\end{proof}

\section{Relation to PDE}\label{sec 5}
In this section we study local reqularity of viscosity solutions to the inhomogeneous $p$-Laplace equation
\begin{equation}\label{vissi}
-\Delta^N_p u=f
\end{equation}
in $\Omega$. As before, $p>2$. In the whole section $f>0$ is continuous and bounded in $\overline{\Omega}$, and boundary values of viscosity solutions are required to be continuous and bounded. Recall that 
\[
\Delta^N_p u=\frac{1}{p}\left|\nabla u\right|^{2-p}\Delta_p u
\]
is the normalized $p$-Laplacian. Here
\[
\Delta_p u=\text{div}(\left|\nabla u\right|^{p-2}\nabla u)=\left|\nabla u\right|^{p-2}((p-2)\Delta^N_\infty u+\Delta u),
\]
where
\[
\Delta^N_\infty u=\left|\nabla u\right|^{-2}\Delta_\infty u=\left|\nabla u\right|^{-2}\left\langle D^2 u\ \nabla u,\nabla u\right\rangle.
\]

By \cite[Proposition 3]{Kawohl}, we can define viscosity solutions to \eqref{vissi} as follows.   

\begin{maar}\label{mar} 
A continuous function $u$ is a viscosity solution to \eqref{vissi} at $x\in \Omega$, if and only if every $C^2$-function $\phi$, $\nabla \phi(x)\neq 0$ or $D^2 \phi(x)=0$, that touches $u$ from below in $x\in \Omega$, satisfies 
\[
-\Delta^N_p \phi(x)\geq f(x),
\]
and every $C^2$-function $\phi$, $\nabla \phi(x)\neq 0$ or $D^2 \phi(x)=0$, that touches $u$ from above in $x\in \Omega$, satisfies 
\[
-\Delta^N_p \phi(x)\leq f(x).
\]
\end{maar}

Note that if a test function $\phi$ satisfies $\nabla \phi (x)=0$ and $D^2 \phi(x)=0$ for some $x\in \Omega$, by the convergence argument explained in \cite{Kawohl} we can set 
\[
\Delta^N_p \phi(x)=0.
\]  

The idea for showing local regularity properties for viscosity solutions to \eqref{vissi} is to notice that viscosity solutions can be approximated uniformly by value functions of tug-of-war with noise and running payoff. We need the following Arzela-Ascoli-type lemma, which is proven in \cite[Lemma 4.2]{prop}.

\begin{lem}\label{Ascoli}
Let $\left\{u_\epsilon:\overline{\Omega}\rightarrow \R,\ \epsilon>0\right\}$ be a uniformly bounded set of functions such that given $\eta>0$, there are constants $r_0$ and $\epsilon_0$ such that for every $\epsilon<\epsilon_0$ and any $x,y\in \overline{\Omega}$ with $\left|x-y\right|<r_0$ it holds that 
\[
\left|u_\epsilon(x)-u_\epsilon(y)\right|<\eta. 
\]
Then there exists a uniformly continuous function $u:\overline{\Omega}\rightarrow \R$ and a subsequence still denoted by ${u_\epsilon}$ such that $u_\epsilon\rightarrow u$ uniformly in $\overline{\Omega}$ as $\epsilon\rightarrow 0$.
\end{lem}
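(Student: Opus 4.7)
The proof is a standard Arzelà--Ascoli-type diagonal argument, with the only twist being that equicontinuity is asymptotic (holds only for $\epsilon<\epsilon_0$). I would organize it in the following way.

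First, I would fix a countable dense subset $D=\{q_1,q_2,\ldots\}\subset\overline{\Omega}$, which exists since $\overline{\Omega}$ is separable. Using a diagonal extraction on the uniformly bounded numerical sequences $\{u_\epsilon(q_i)\}_{\epsilon}$, I would produce a sequence $\epsilon_k\to 0$ along which $u_{\epsilon_k}(q_i)$ converges for every $i\in\N$. Define $u(q_i):=\lim_k u_{\epsilon_k}(q_i)$. The asymptotic equicontinuity then transfers to $u$ on $D$: given $\eta>0$, pick $r_0,\epsilon_0$ from the hypothesis; for $q_i,q_j\in D$ with $|q_i-q_j|<r_0$ and all $k$ with $\epsilon_k<\epsilon_0$, we have $|u_{\epsilon_k}(q_i)-u_{\epsilon_k}(q_j)|<\eta$, and passing to the limit gives $|u(q_i)-u(q_j)|\leq\eta$. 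Hence $u$ is uniformly continuous on the dense set $D$ and extends uniquely to a uniformly continuous function $u:\overline{\Omega}\to\R$.

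Next I would verify that $u_{\epsilon_k}\to u$ uniformly on $\overline{\Omega}$. Fix $\eta>0$ and let $r_0,\epsilon_0$ correspond to $\eta/3$ in the equicontinuity hypothesis. By compactness of $\overline{\Omega}$, cover it by finitely many balls $B_{r_0/3}(p_1),\ldots,B_{r_0/3}(p_N)$ with $p_j\in D$. There is $k_0$ such that $|u_{\epsilon_k}(p_j)-u(p_j)|<\eta/3$ for all $k\geq k_0$ and $j=1,\ldots,N$, and we may also require $\epsilon_k<\epsilon_0$. For any $x\in\overline{\Omega}$, pick $p_j$ with $|x-p_j|<r_0$; then by the triangle inequality
\[
|u_{\epsilon_k}(x)-u(x)|\leq |u_{\epsilon_k}(x)-u_{\epsilon_k}(p_j)|+|u_{\epsilon_k}(p_j)-u(p_j)|+|u(p_j)-u(x)|<\eta,
\]
where the first term is controlled by the asymptotic equicontinuity, the second by the choice of $k_0$, and the third by uniform continuity of $u$. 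This proves uniform convergence.

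The only mildly subtle point is that the hypothesis is formulated for $\epsilon<\epsilon_0$, not for the whole family, so one must be careful to work along the extracted subsequence $\epsilon_k\to 0$ and apply the equicontinuity only for indices with $\epsilon_k<\epsilon_0$; there is no genuine obstacle beyond this bookkeeping. No results from earlier in the paper are needed beyond the statement itself.
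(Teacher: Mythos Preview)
Your argument is correct and is precisely the standard Arzel\`a--Ascoli diagonal extraction adapted to asymptotic equicontinuity. Note, however, that the paper does not supply its own proof of this lemma: it simply cites \cite[Lemma~4.2]{prop} (Manfredi--Parviainen--Rossi), so there is no in-paper proof to compare against. The argument in that reference is essentially the one you wrote.
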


Let $u$ be a viscosity solution to \eqref{vissi} in $\Omega$. We may assume $0\in \Omega$. Choose $R>0$ such that $B_{2R}(0)\subset \Omega$. Let $u_\epsilon$, $0<\epsilon<R$, be the value function of tug-of-war with noise and running payoff in $B_R(0)$, where running payoff is $f$ from equation \eqref{vissi}, and final payoff is $u$ on the boundary strip
\[
\Gamma_\epsilon=\left\{x\in \Omega\setminus B_R(0)\ :\ \text{dist}(x,\partial B_R(0))\leq \epsilon\right\}.
\] 

\begin{lem}\label{asymp}
The sequence $(u_\epsilon)$, defined in $B_R(0)\cup\Gamma_\epsilon$ as described above, satisfies the conditions of Lemma \ref{Ascoli} in $\overline{B}_R(0)$. 
\end{lem}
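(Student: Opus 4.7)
The plan is to verify the two hypotheses of Lemma \ref{Ascoli}: uniform boundedness and the uniform equicontinuity condition of $(u_\epsilon)$ on $\overline{B}_R(0)$, where we extend each $u_\epsilon$ to $\partial B_R(0)$ by setting $u_\epsilon=u$. Uniform boundedness is immediate. Since $u$ is continuous on $\overline{\Omega}$, the final payoff $F=u|_{\Gamma_\epsilon}$ is bounded independently of $\epsilon$, so Lemma \ref{maks} yields $u_\epsilon\leq C(\|u\|_\infty+\|f\|_\infty)$ with $C$ independent of $\epsilon$, while positivity of $f$ gives $u_\epsilon\geq \inf_{\Gamma_\epsilon}F\geq -\|u\|_\infty$. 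Shifting by a constant to make $u_\epsilon>0$ preserves oscillations, so Theorem \ref{Lemma 1} remains applicable below.

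For the interior equicontinuity, fix $\rho>0$ and set $K_\rho=\{x\in B_R(0):\operatorname{dist}(x,\partial B_R(0))\geq \rho\}$. Every $x\in K_\rho$ satisfies $B_\rho(x)\subset B_R(0)$, so Theorem \ref{Lemma 1} applied with outer radius $\rho/6$ gives, for $\epsilon<r\leq \rho/6$,
\[
\operatorname{osc}(u_\epsilon,B_r(x))\leq C\frac{r}{\rho}\bigl[\operatorname{osc}(u_\epsilon,B_\rho(x))+\operatorname{osc}(f,B_\rho(x))\bigr]\leq C'\frac{r}{\rho},
\]
using the uniform bounds on $u_\epsilon$ and $f$. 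For $|x-y|<\epsilon$ the same estimate at scale $r=\epsilon$ applies. This produces a modulus of continuity on each $K_\rho$ that is independent of $\epsilon$.

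The main obstacle is boundary regularity: for $x_0\in B_R(0)$ near $z_0\in\partial B_R(0)$, I must show that $|u_\epsilon(x_0)-u(z_0)|$ is small uniformly in small $\epsilon$. I would exploit that $\partial B_R(0)$ trivially satisfies the exterior sphere condition; pick an exterior center $z_0^*\notin\overline{B}_R(0)$ close to $z_0$, and construct a radial barrier $v$ centered at $z_0^*$, in the spirit of Lemma \ref{maks}, satisfying the supersolution inequality
\[
v(x)\geq \frac{\alpha}{2}\bigl(\sup_{B_\epsilon(x)}v+\inf_{B_\epsilon(x)}v\bigr)+\beta\vint_{B_\epsilon(x)}v\,dy+\epsilon^2
\]
in a neighborhood of $z_0$ inside $B_R(0)$. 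If Minnie plays by stepping towards the minimum of $v$, then $M_k=v(x_k)+k(\beta/2)\epsilon^2$ is a supermartingale, and the Optional Stopping Theorem controls both the expected stopping time and the probability that the exit point $x_\tau$ lies far from $z_0$ on $\partial B_R(0)$. Combined with the uniform continuity of $u$ on $\overline{\Omega}$ (used to replace $u(x_\tau)$ by $u(z_0)$ up to the modulus $\omega_u$) and with Lemma \ref{2.4}, this yields the upper barrier $u_\epsilon(x_0)\leq u(z_0)+\omega_u(|x_0-z_0|)+o_\epsilon(1)$. The symmetric argument with Max pushing outwards gives the lower barrier, so $u_\epsilon(x_0)\to u(z_0)$ as $x_0\to z_0$ uniformly in small $\epsilon$.

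Given $\eta>0$, I would finally choose $\rho_0$ and $\epsilon_0$ so that the boundary estimate yields $|u_\epsilon(x)-u(z_0)|<\eta/3$ whenever $|x-z_0|\leq 2\rho_0$ for some $z_0\in\partial B_R(0)$ and $\epsilon<\epsilon_0$, and then $r_0\leq \rho_0$ small enough that the interior modulus gives $|u_\epsilon(x)-u_\epsilon(y)|<\eta$ on $K_{\rho_0/2}$ when $|x-y|<r_0$. Splitting pairs $(x,y)$ by whether each lies in $K_{\rho_0/2}$ or within $\rho_0$ of the boundary, and combining via the triangle inequality and continuity of $u$ on $\partial B_R(0)$ in the mixed case, finishes the verification. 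The hardest point is the boundary barrier itself: the running-payoff term $\epsilon^2 f(x)$ must be absorbed into the supersolution property uniformly for small $\epsilon$, and the radial barrier must control not merely the expected exit time (as in Lemma \ref{maks}) but the actual distribution of the exit location on $\partial B_R(0)$.
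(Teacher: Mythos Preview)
Your proposal is correct and follows essentially the same architecture as the paper: uniform boundedness from Lemma~\ref{maks}, interior equicontinuity from Theorem~\ref{Lemma 1}, and boundary regularity via a radial barrier centered at an exterior point (exploiting the exterior sphere condition of $\partial B_R(0)$) together with the strategy of pushing the token outward. The only stylistic difference is that the paper builds the barrier $v$ as the solution of a Poisson problem $\Delta v=-4(n+2)\sup_\Omega f$ on an annulus with explicit Dirichlet data, and uses the supermartingale $v(x_k)+k\epsilon^2\sup_\Omega f$ to obtain the direct comparison $u_\epsilon(x_0)\leq v(x_0)$, whereas you phrase the same mechanism as controlling the exit distribution and the expected stopping time separately; these are two readings of the same optional-stopping computation.
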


\begin{proof}
By Lemma \ref{maks}, the sequence $(u_\epsilon)$ is uniformly bounded in $\overline{B}_R(0)$. Fix $\eta>0$. Since $u$ is uniformly continuous in $B_R(0)\cup \Gamma_\epsilon$, there is $r_1>0$ such that $x,y\in B_R(0)\cup \Gamma_\epsilon$, $\left|x-y\right|<r_1$, implies   
\[
\left|u(x)-u(y)\right|<\eta/2.
\]
When $x,y\in \partial B_R(0)$, the same estimate holds between $u_\epsilon(x)$ and $u_\epsilon(y)$ for all $0<\epsilon<R$, since $u_\epsilon=u$ on $\Gamma_\epsilon$.

Let us next work out the case $x\in B_R(0)$, $y\in \partial B_R(0)$. Select $0<s<S<r_1$ and $z\in \Gamma_\epsilon$ such that $y\in \partial B_s(z)$ and $B_{2S}(z)\subset B_R(0)\cup \Gamma_\epsilon$. Consider a function $v:\overline{B}_{2S}(z)\setminus B_s(z)\rightarrow \R$, 
\begin{equation*}
\begin{cases}
\Delta v=-4(n+2)\sup_\Omega f\ &\text{in}\ B_{2S}(z)\setminus \overline{B}_s(z), \\
v=\sup_{B_R (z)}u\ &\text{on}\ \partial B_s(z), \\
v=\sup_{\Gamma_\epsilon}u\ &\text{on}\ \partial B_{2S}(z).
\end{cases}
\end{equation*}
Note that this function satisfies
\[
v(a)=\vint_{B_\epsilon(a)}v dy+2\epsilon^2\sup_\Omega f
\]
when $B_\epsilon(a)\subset B_{2S}(z)\setminus \overline{B}_s(z)$. 

Let $r_2<S-s$ be so small that
\[
\sup_{B_{r_2}(y)}v<\sup_{B_S(z)}u+\eta/2.
\] 
Pick $x\in B_{r_2}(y)\cap B_R(0)$. Since $\left|x-y\right|<S-s$, by the triangle inequality $x\in B_R(0)\cap B_S(z)$. Let $\epsilon<S$. We start a game from $x_0=x$. Minnie plays with the following strategy $S^\text{Min}_{\text{II}}$: at $x_{k-1}$, she aims to a point $x_k$ where 
\[
v(x_k)\leq \inf_{B_\epsilon (x_{k-1})}v+\frac{1}{2}\epsilon^2 \sup_\Omega f.
\] 
We stop the game when $x_k\in B_{2S}(z)\setminus (B_S(z)\cap B_R(0))$ for the first time. Let this stopping time be $\tau^*$. 

Max plays with a strategy $S_{\text{I}}$. From radial convexity of $v$ we obtain 
\begin{align*}
\mathbb{E}^{x_0}_{S_\text{I},S^\text{Min}_\text{II}}&(v(x_k)|\mathcal{F}_{k-1})\\
& \leq \frac{\alpha}{2}\left\{\sup_{B_\epsilon (x_{k-1})}v+\inf_{B_\epsilon (x_{k-1})}v+\frac{1}{2}\epsilon^2 \sup_\Omega f\right\}+\beta \vint_{B_\epsilon(x_{k-1})}vdy\\
& \leq \alpha v(x_{k-1})+\frac{\alpha}{4}\epsilon^2 \sup_\Omega f+\beta v(x_{k-1})-2\epsilon^2 \sup_\Omega f\\
& \leq v(x_{k-1})-\epsilon^2 \sup_\Omega f.
\end{align*}
Hence $M_k:=v(x_k)+k\epsilon^2 \sup_\Omega f$ is a supermartingale and we obtain
\begin{align*}
u_\epsilon(x_0)&\leq \sup_{S_\text{I}}\mathbb{E}^{x_0}_{S_\text{I},S^\text{Min}_\text{II}}\left[u_\epsilon(x_{\tau^*})+\epsilon^2\sum^{\tau^*-1}_{i=0}f(x_i)\right]\\
& \leq \sup_{S_\text{I}}\mathbb{E}^{x_0}_{S_\text{I},S^\text{Min}_\text{II}}\left[v(x_{\tau^*})+\tau^*\epsilon^2 \sup_\Omega f\right]\\
& \leq v(x_0).
\end{align*}
We conclude that when $x\in B_R(0)$, $y\in \partial B_R(0)$, $\left|x-y\right|<r_2$ and $\epsilon<S$, we have
\[
\left|u_\epsilon(x)-u_\epsilon(y)\right|\leq \left|u_\epsilon(x_0)-\sup_{B_S(z)\cap \Gamma_\epsilon}u\right|+ \left|\sup_{B_S(z)\cap \Gamma_\epsilon}u-u_\epsilon(y)\right|<\eta. 
\] 
 
Finally, let us examine the case $x,y\in B_R(0)$. If $\text{dist}(x,\partial B_R(0))<r_2 /5$ and $\left|x-y\right|<r_2 /5$, there is $y_0\in \partial B_R(0)$ such that $\left|x-y_0\right|<r_2$ and $\left|y-y_0\right|<r_2$. Then 
\[
\left|u_\epsilon(x)-u_\epsilon(y)\right|<2\eta.
\]
Hence we can assume that $x,y\in B_{R-r_2/3}(0)$, and the asymptotic estimate 
\[
\left|u_\epsilon(x)-u_\epsilon(y)\right|<\eta
\]
follows straightforwardly from Theorem \ref{Lemma 1}. 
\end{proof} 

The ideas in the proof of Lemma \ref{raja} are similar to those used in the proof of \cite[Theorem 4.9]{prop}, where the uniform limit of the value functions of tug-of-war with noise was shown to be a viscosity solution to the homogeneous $p$-Laplace equation.

\begin{lem}\label{raja}
Let $u$ be a viscosity solution to \eqref{vissi} in $\Omega$ and $B_{2R}(0)\subset \Omega$. Then $u$ can be approximated uniformly by value functions of tug-of-war with noise and running payoff in $B_R(0)$. 
\end{lem}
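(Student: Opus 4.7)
The plan is to extract a uniform limit from the sequence $(u_\epsilon)$ via Lemma \ref{Ascoli}, identify this limit as a viscosity solution to \eqref{vissi} with the correct boundary data, and then invoke uniqueness for the Dirichlet problem to conclude that the limit equals $u$. Since the limit will be unique, the full sequence (not merely a subsequence) converges uniformly, which is the desired statement.

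First I would combine Lemma \ref{asymp} with Lemma \ref{Ascoli} to obtain a continuous $\tilde u$ on $\overline{B}_R(0)$ and a subsequence $u_{\epsilon_j}\to \tilde u$ uniformly. The boundary identification $\tilde u=u$ on $\partial B_R(0)$ is immediate from the boundary asymptotics built into the proof of Lemma \ref{asymp}: for $y\in \partial B_R(0)$ and $x\in B_R(0)$ with $|x-y|$ small, $|u_\epsilon(x)-u_\epsilon(y)|$ is controlled by a modulus independent of $\epsilon$, while $u_\epsilon=u$ on $\Gamma_\epsilon$.

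The main step will be verifying that $\tilde u$ is a viscosity solution of \eqref{vissi} in $B_R(0)$ in the sense of Definition \ref{mar}. I would argue in the standard way: fix a $C^2$ test function $\phi$ touching $\tilde u$ strictly from below at some $x_0\in B_R(0)$ with $\nabla\phi(x_0)\ne 0$ (after adding a harmless term like $-|x-x_0|^4$ to enforce strict touching), and extract $x_\epsilon\to x_0$ at which $u_\epsilon-\phi$ attains a local minimum, with $(u_\epsilon-\phi)(x_\epsilon)\to 0$ by uniform convergence. Plugging the inequality $\phi\le u_\epsilon-(u_\epsilon-\phi)(x_\epsilon)$ into the DPP \eqref{dynaamista} at $x_\epsilon$ and then Taylor-expanding, together with the asymptotic identities
\begin{align*}
\tfrac12\bigl(\sup_{B_\epsilon(x_\epsilon)}\phi+\inf_{B_\epsilon(x_\epsilon)}\phi\bigr)-\phi(x_\epsilon) &= \tfrac{\epsilon^2}{2}\Delta^N_\infty\phi(x_\epsilon)+o(\epsilon^2),\\
\vint_{B_\epsilon(x_\epsilon)}\phi\,dy-\phi(x_\epsilon) &= \tfrac{\epsilon^2}{2(n+2)}\Delta\phi(x_\epsilon)+o(\epsilon^2),
\end{align*}
dividing by $\epsilon^2$ and using $\alpha=(p-2)/(n+p)$, $\beta=(n+2)/(n+p)$ would then produce in the limit the inequality $-\Delta^N_p\phi(x_0)\ge f(x_0)$ (up to the standard normalisation constant coming from the game setup). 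The supersolution case is symmetric. The degenerate case $\nabla\phi(x_0)=0$, $D^2\phi(x_0)=0$ is handled by the convention $\Delta^N_p\phi(x_0):=0$ from the discussion after Definition \ref{mar}, noting that $f>0$ precludes test functions touching from below at such a point.

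With $\tilde u$ a continuous viscosity solution of \eqref{vissi} in $B_R(0)$ agreeing with $u$ on $\partial B_R(0)$, the comparison principle for the inhomogeneous normalised $p$-Laplace equation (with $f>0$ bounded and continuous, as assumed throughout this section) forces $\tilde u=u$ in $\overline{B}_R(0)$. Since every subsequential uniform limit of $(u_\epsilon)$ coincides with $u$, the full family converges to $u$ uniformly in $\overline{B}_R(0)$. I expect the verification of the viscosity property to be the most delicate part: one must justify the existence of the near-minima $x_\epsilon$ (via the strict-touching trick), control the Taylor remainders uniformly as $x_\epsilon\to x_0$, and carefully track the normalisation constant relating the DPP balance to the operator $\Delta^N_p$ appearing in \eqref{vissi}.
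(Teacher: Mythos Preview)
Your proposal is correct and follows essentially the same route as the paper: extract a uniform limit via Lemmas \ref{asymp} and \ref{Ascoli}, verify it is a viscosity solution of \eqref{vissi} through the DPP and the Taylor expansions from \cite{asymp}, and then invoke the comparison principle to identify the limit with $u$. The only cosmetic difference is that the paper absorbs the normalisation constant you flag by taking the running payoff to be $\overline f=\frac{p\beta}{2(n+2)}f$ from the outset, so that the limiting inequality is exactly $-\Delta^N_p\phi\ge f$ without any residual factor; your ``track the normalisation constant'' remark is precisely this step.
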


\begin{proof}
Let $(u_\epsilon)$ be a sequence of value functions in $B_R(0)$ with final payoff $u$ and running payoff 
\[
\overline{f}=\frac{p\beta}{2(n+2)}f.
\]
By Lemma \ref{Ascoli}, it follows from Lemma \ref{asymp} that there is a uniformly continuous function $v$ in $\overline{B}_R(0)$, $v=u$ on $\partial B_R(0)$, such that there is a subsequence of $(u_\epsilon)$ converging uniformly to $v$ in $\overline{B}_R(0)$ when $\epsilon\rightarrow 0$. For convenience of notation, we denote this subsequence $(u_\epsilon)$. We are going to show that the function $v$ is a viscosity solution to \eqref{vissi} in $B_R(0)$. By comparison principle (see e.g.\ \cite[Theorem 5]{Kawohl}, and also \cite{Lu Wang}), we will conclude that $v=u$ in $B_R(0)$.

Choose a point $x\in B_R(0)$. We only work out the supersolution part, since the subsolution part is similar. Let $\phi\in C^2(B)$, $\nabla \phi(x)\neq 0$ or $D^2\phi(x)=0$, be defined in a neighborhood $B$ of $x$, touching $v$ from below in $x$. We need to show that
\begin{equation}\label{vissiin}
p(\Delta^N_p \phi(x)+f(x))=(p-2)\Delta^N_\infty \phi+\Delta \phi+pf(x)\leq 0.
\end{equation}
If $\nabla \phi(x)=0$ and $D^2\phi(x)=0$, we have $\Delta^N_p \phi(x)=0$ and inequality \eqref{vissiin} cannot hold. Hence we can assume $\nabla \phi(x)\neq 0$. From Taylor expansion results in \cite{asymp} it follows that there is a point $\overline{x}^\epsilon\in B_\epsilon (x)$ in the direction of $\nabla \phi(x)$ such that
\begin{align*}
& \frac{\alpha}{2}\left\{\sup_{B_\epsilon (x)}\phi+\inf_{B_\epsilon (x)}\phi\right\}+\beta \vint_{B_\epsilon (x)}\phi(y)dy-\phi(x)\\
& \geq \frac{\beta \epsilon^2}{2(n+2)}\left((p-2)\left\langle D^2 \phi(x)\left(\frac{\overline{x}^\epsilon-x}{\left|\overline{x}^\epsilon-x\right|}\right),\left(\frac{\overline{x}^\epsilon-x}{\left|\overline{x}^\epsilon-x\right|}\right)\right\rangle+\Delta\phi(x)\right) \\
&\phantom{{}=\frac{\beta \epsilon^2}{2(n+2)}} +o(\epsilon^2).\notag
\end{align*}
  
Since $v$ is the uniform limit of the sequence $(u_\epsilon)$, there is a sequence $(x_\epsilon)\subset B$ converging to $x$ so that 
\[
u_\epsilon (y)-\phi (y)\geq -\epsilon^3
\]
when $y\in B_\epsilon(x_\epsilon)$. Using the DPP characterization of $u_\epsilon$ we obtain 
\begin{align*}
\epsilon^3 \geq -\phi(x_\epsilon)&+\frac{\alpha}{2}\left\{\sup_{B_\epsilon (x)}\phi+\inf_{B_\epsilon (x)}\phi\right\}+\beta \vint_{B_\epsilon (x)}\phi(y)dy \\
& +\epsilon^2\overline{f}(x_\epsilon).
\end{align*}
Hence
\begin{align*}
-\epsilon^3 \geq &\frac{\beta \epsilon^2}{2(n+2)}\left((p-2)\left\langle D^2 \phi(x)\left(\frac{\overline{x}^\epsilon-x}{\left|\overline{x}^\epsilon-x\right|}\right),\left(\frac{\overline{x}^\epsilon-x}{\left|\overline{x}^\epsilon-x\right|}\right)\right\rangle+\Delta\phi(x)\right) \\
& +\epsilon^2\overline{f}(x_\epsilon)+o(\epsilon^2).
\end{align*}
Since $\overline{f}$ is continuous and $\nabla \phi(x)\neq 0$, dividing by $\epsilon^2$ and then letting $\epsilon\rightarrow 0$ we get
\[
0\geq \frac{\beta}{2(n+2)}\left((p-2)\Delta^N_\infty \phi(x)+\Delta \phi(x)\right)+\overline{f}(x).
\]
Remembering how the running payoff $\overline{f}$ was chosen, we have 
\[
0\geq \frac{\beta}{2(n+2)}\left((p-2)\Delta^N_\infty \phi(x)+\Delta \phi(x)+pf(x)\right).
\]
Hence $v$ is a viscosity supersolution to \eqref{vissi} in $B_R(0)$. By similar argument $v$ is also a viscosity subsolution, hence a viscosity solution. By the discussion in the beginning of the proof, the proof is complete.
\end{proof}

\begin{lause}
Nonnegative viscosity solutions of \eqref{vissi} are locally Lipschitz continuous and satisfy Harnack's inequality.   
\end{lause}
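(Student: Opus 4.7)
The plan is to combine Lemma \ref{raja}, which provides uniform local approximation of viscosity solutions by game value functions, with Theorems \ref{Lemma 1} and \ref{p��lause} at the discrete level, and then to pass to the limit $\epsilon\to 0$.

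Fix a nonnegative viscosity solution $u$ of \eqref{vissi}. For any $x_0\in\Omega$, pick $R_0>0$ with $\overline{B}_{2R_0}(x_0)\subset\Omega$. By Lemma \ref{raja} there exists a subsequence of value functions $u_\epsilon$ on $B_{R_0}(x_0)\cup\Gamma_\epsilon$, with boundary payoff $u|_{\Gamma_\epsilon}$ and running payoff $\overline{f}=\tfrac{p\beta}{2(n+2)}f$, converging uniformly to $u$ on $\overline{B}_{R_0}(x_0)$. Since $\overline{f}>0$ and $u\geq 0$ on the boundary strip, each $u_\epsilon$ is strictly positive in the game domain, which unlocks the positivity hypothesis of Theorems \ref{Lemma 1} and \ref{p��lause}.

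For local Lipschitz continuity, apply Theorem \ref{Lemma 1} to $u_\epsilon$ at center $x_0$ with $R=R_0/6$: for every $r$ with $\epsilon<r\leq R_0/6$,
\[
\text{osc}(u_\epsilon,B_r(x_0))\leq \frac{6Cr}{R_0}\bigl[\text{osc}(u_\epsilon,B_{R_0}(x_0))+\text{osc}(\overline{f},B_{R_0}(x_0))\bigr].
\]
Uniform convergence on $\overline{B}_{R_0}(x_0)$ makes the oscillation of $u_\epsilon$ converge to that of $u$, so in the limit $\text{osc}(u,B_r(x_0))\leq C'r$ with $C'$ depending on $R_0$, $\|u\|_\infty$ on $B_{R_0}(x_0)$, and $\|f\|_\infty$. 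The range $\epsilon<r\leq R_0/6$ is no obstacle because we first fix $r$ and then send $\epsilon\to 0$. Covering any compact $K\subset\Omega$ with finitely many such balls yields local Lipschitz continuity.

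For Harnack's inequality, given $a\in\Omega$ and $r>0$ with $\overline{B}_{60r}(a)\subset\Omega$, run the construction above at $a$ with $R_0=30r$. Theorem \ref{p��lause} applied to $u_\epsilon$ on the game domain $B_{30r}(a)$ gives
\[
\sup_{B_r(a)}u_\epsilon\leq K\bigl(\inf_{B_r(a)}u_\epsilon+\sup_{B_{30r}(a)}\overline{f}\bigr),
\]
with $K=K(p,n)$. Uniform convergence preserves sup and inf over the compact ball $\overline{B}_r(a)$, so passing $\epsilon\to 0$ and absorbing the constant $\tfrac{p\beta}{2(n+2)}$ yields
\[
\sup_{B_r(a)}u\leq K'\bigl(\inf_{B_r(a)}u+\sup_\Omega f\bigr),
\]
as desired. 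The only real bookkeeping issue is matching radii so that the discrete estimates apply strictly inside the approximation domain; once that is arranged, uniform convergence does the rest, and neither a perturbation argument for positivity nor any additional regularity of $f$ is required.
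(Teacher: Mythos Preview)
Your proposal is correct and follows essentially the same approach as the paper: approximate the viscosity solution locally uniformly by value functions via Lemma \ref{raja}, apply Theorems \ref{Lemma 1} and \ref{p��lause} at the discrete level, and pass to the limit. The paper's own proof is extremely terse (three sentences), and your added bookkeeping---verifying $u_\epsilon>0$ from $u\geq 0$ and $\overline f>0$, fixing $r$ before sending $\epsilon\to 0$, and flagging the need to fit $B_{30r}(a)$ strictly inside the approximation domain---are exactly the details the paper leaves implicit.
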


\begin{proof}
By previous lemma, each viscosity solution can be approximated locally uniformly by value functions. Hence, Harnack's inequality for viscosity solutions follows immediately from Theorem \ref{päälause}. By Theorem \ref{Lemma 1}, value functions are locally Lipschitz continuous up to the scale $\epsilon$ with a Lipschitz constant depending only on $p$ and $n$. Therefore viscosity solutions are locally Lipschitz continuous.
\end{proof}

\noindent \textbf{Acknowledgement.} The author would like to thank Mikko Parviainen for many discussions and insightful comments regarding this work.

\end{document}